\newcounter{thmctr}
\newtheorem{thm}[thmctr]{Theorem}
\newtheorem{lemma}[thmctr]{Lemma}
\newtheorem{prop}[thmctr]{Proposition}
\newtheorem*{definition}{Definition}
\theoremstyle{definition}
\newtheorem*{problem}{Problem}
\newtheorem*{constr}{Construction}
\newtheorem*{remarks}{Remarks}
\theoremstyle{plain}
\newcommand{\sm}{\setminus} 
\newcommand{\B}{\mathcal{B}}
\newcommand{\Z}{\mathcal{Z}}  
\newcommand{\eps}{\varepsilon}
\newcommand{\Pa}{{\mathcal P}}
\tikzstyle{vertex}=[circle,fill=black,inner sep=2pt]
\newcommand{\dhruvuni}{University of Illinois at Chicago \\ mubayi@uic.edu}
\newcommand{\johnuni}{University of Illinois at Chicago \\ lenz@math.uic.edu}
\newcommand{\dhruvfoot}{\footnote{Research supported in part by  NSF Grants DMS-0969092 and DMS-1300138.}}
\newcommand{\johnfoot}{\footnote{Research partly supported by NSA Grant H98230-13-1-0224.}}
\newcommand{\richarduni}{University of Birmingham \\	r.mycroft@bham.ac.uk}
\newcommand{\richardfoot}{\footnote{Research supported in part by EPSRC Grant EP/M011771/1.}}
\title{Hamilton cycles in quasirandom hypergraphs\footnote{Accepted for publication in Random Structures \& Algorithms.}}
\author{John Lenz \johnfoot \\ \johnuni \and Dhruv Mubayi \dhruvfoot \\ \dhruvuni \and Richard
Mycroft \richardfoot \\ \richarduni}
\begin{document}

\maketitle

\begin{abstract}
We show that, for a natural notion of quasirandomness in $k$-uniform hypergraphs, any quasirandom $k$-uniform hypergraph on $n$ vertices with constant edge density and minimum vertex degree $\Omega(n^{k-1})$ contains a loose Hamilton cycle. We also give a construction to show that a $k$-uniform hypergraph satisfying these conditions need not contain a Hamilton $\ell$-cycle if $k-\ell$ divides $k$. The remaining values of $\ell$ form an interesting open question.
\end{abstract}

\section{Introduction}\label{intro}

A \emph{$k$-uniform hypergraph}, or \emph{$k$-graph} $H$ consists of a set of vertices $V(H)$ and a set of edges $E(H)$, where each edge
consists of $k$ vertices. We say that a $k$-graph $C$ is an \emph{$\ell$-cycle} if its vertices can be
cyclically ordered in such a way that each edge of $C$ consists of $k$ consecutive vertices, and so
that each edge intersects the subsequent edge in $\ell$ vertices. This generalises the notion of a
cycle in a graph in a natural manner, though various other definitions of cycles in hypergraphs have also
been considered, such as a Berge cycle~\cite{hh-bermond78}.  Note in particular that each edge of an
$\ell$-cycle $k$-graph $C$ has $k-\ell$ vertices which were not contained in the previous edge, so
the number of vertices of $C$ must be divisible by $k-\ell$.  We say that a $k$-graph $H$ on $n$
vertices contains a \emph{Hamilton $\ell$-cycle} if it contains an $n$-vertex $\ell$-cycle as a
subgraph; as before, this is only possible if $k-\ell$ divides $n$. We refer to $1$-cycles and $(k-1)$-cycles as \emph{loose cycles} and \emph{tight cycles} respectively, and in the same way refer to \emph{loose Hamilton cycles} and \emph{tight Hamilton cycles}.

Finding sufficient conditions which ensure that a $k$-graph contains a Hamilton $\ell$-cycle (or other spanning structure)
has been a highly active area of research in recent years, with particular interest in minimum degree conditions and quasirandomness conditions. 

\subsection{Minimum degree conditions}
Sufficient minimum degree conditions which ensure that a $k$-graph contains a Hamilton $\ell$-cycle 
can be seen as hypergraph analogues of the well-known Dirac's theorem~\cite{dirac52}, which states that 
any graph $G$ on $n \geq 3$ vertices with $\delta(G) \geq n/2$ contains a Hamilton cycle. 
For a $k$-graph $H$ and a set $S \subseteq V(H)$, the \emph{degree} of $S$, denoted $d_H(S)$ or $d(S)$ (when $H$ is clear from the context), is the the number of edges of $H$ which 
contain $S$ as a subset. If $S = \{v\}$ is a singleton then we write simply $d_H(v)$ or $d(v)$ rather than $d(\{v\})$.
The \emph{minimum $s$-degree} of $H$, written $\delta_s(H)$, is the 
minimum of $d(S)$ taken over all $s$-sets of vertices of $H$. In particular we refer to the 
minimum $1$-degree and the minimum $k-1$ degree of $H$ as the \emph{minimum vertex degree} 
and \emph{minimum codegree} of $H$ respectively. The following theorem gives an Dirac-type result for $k$-graphs with high minimum codegree; simple constructions show that for any $k$ and $\ell$ this minimum codegree condition is best possible up to the $\eta n$ error term. 

\begin{thm} \label{codeg}
  For any $k \geq 3$, $1 \leq \ell \leq k-1$ and $\eta > 0$, there exists~$n_0$ such that if $n \geq
  n_0$ is divisible by $k-\ell$ and $H$ is a $k$-graph on $n$ vertices with $$\delta_{k-1}(H) \geq 
  \begin{cases}
    \left( \frac{1}{2} + \eta \right) n& \mbox{ if $k-\ell $ divides $k$,} \\
    \left(\frac{1}{\lceil 
    \frac{k}{k-\ell} \rceil(k-\ell)}+\eta\right) n & \mbox{otherwise,}
  \end{cases} 
  $$
  then $H$ contains a Hamilton $\ell$-cycle.
\end{thm}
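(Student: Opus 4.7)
The plan is to apply the absorbing method pioneered by R\"odl, Ruci\'nski, and Szemer\'edi. The Hamilton $\ell$-cycle is assembled from three ingredients: an \emph{absorbing $\ell$-path} $P_A$ with the property that any sufficiently small leftover set $U \subseteq V(H) \setminus V(P_A)$ of the correct divisibility can be incorporated into $P_A$ while keeping its endpoints fixed; a \emph{reservoir} set $R$ of size $o(n)$ from which short connecting $\ell$-paths can repeatedly be drawn; and an almost-spanning $\ell$-path $P^*$ in $H \setminus (V(P_A) \cup R)$ that covers all but a tiny fraction of the vertices. Connecting $P^*$ to $P_A$ through $R$, absorbing the leftover vertices into $P_A$, and making one final connection through $R$ then closes the Hamilton $\ell$-cycle.

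The first tool is a \emph{Connecting Lemma}: any two disjoint ``end $(k-1)$-tuples'' of prospective $\ell$-paths can be joined by an $\ell$-path of bounded length. This follows from iterating the hypothesis $\minco(H) \geq \Omega(n)$: at each step the common neighbourhood of the current $(k-1)$-tuple and any target is nonempty, so the connecting path is built greedily. The Connecting Lemma then feeds into the construction of both $R$ and $P_A$. A standard probabilistic deletion argument selects $R$ so that the codegree property is inherited (on the pair $(V(H),R)$). An Absorbing Lemma, proved by showing that every $(k-\ell)$-set of vertices lies in $\Omega(n^t)$ ``absorbers'' of some fixed size $t$ and then taking a random subcollection of these absorbers concatenated through the Connecting Lemma, produces $P_A$ with $|V(P_A)|=o(n)$ simultaneously capable of absorbing any admissible $U$.

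The crux of the proof is the construction of $P^*$. Applying the weak hypergraph regularity lemma to $H$ yields a cluster $k$-graph to which the codegree condition essentially transfers. When $k-\ell$ divides $k$, every edge of a matching in the cluster $k$-graph corresponds to a regular $k$-tuple of clusters admitting an $\ell$-path covering almost all of its vertices, and the threshold $(1/2+\eta)n$ is precisely what guarantees a perfect matching in the cluster $k$-graph via a Dirac-type matching result. When $k-\ell$ does not divide $k$, a single regular $k$-tuple only produces an $\ell$-path on a number of vertices divisible by $k-\ell$, so to consume a whole cluster one needs about $\lceil k/(k-\ell)\rceil$ regular tuples meeting it; this accounts for the denominator $\lceil k/(k-\ell)\rceil(k-\ell)$ in the stated threshold, and one must prove the existence of a corresponding spanning sub-structure in the cluster $k$-graph at that threshold. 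Stitching the resulting sub-paths together using the Connecting Lemma (consuming vertices from $R$) yields $P^*$.

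The main obstacle I expect is precisely this cluster-hypergraph step in the non-divisible case: one must identify the correct spanning sub-structure at the given codegree threshold and verify that the sub-paths so produced have ``compatible'' end-tuples that can be joined by the Connecting Lemma without upsetting the overall divisibility by $k-\ell$. The absorbing and connecting arguments, while intricate, follow an essentially standard template once the Connecting Lemma is established, so the bulk of the work lies in this extremal/regularity reduction.
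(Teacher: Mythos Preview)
The paper does not prove Theorem~\ref{codeg}; it is stated as background and attributed to the literature. The surrounding text explains that the case $\ell = k-1$ is due to R\"odl, Ruci\'nski and Szemer\'edi, that the remaining cases with $k-\ell \mid k$ follow \emph{immediately} from this tight case (since a tight Hamilton cycle on $n$ vertices contains a Hamilton $\ell$-cycle whenever $k-\ell$ divides $n$), and that the non-divisible cases were handled in a sequence of papers by K\"uhn--Osthus, Keevash--K\"uhn--Mycroft--Osthus, H\`an--Schacht, and K\"uhn--Mycroft--Osthus. So there is no ``paper's own proof'' to compare against beyond these citations.

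Your outline is broadly in the spirit of those cited works, but two points are worth noting. First, for the divisible case you propose running the full absorbing/regularity machinery for $\ell$-cycles directly and appealing to a perfect matching in the cluster $k$-graph at threshold $\tfrac{1}{2}+\eta$; the paper instead observes that this entire case is a one-line corollary of the tight case, so no separate $\ell$-cycle argument is needed there. Second, your sketch of the non-divisible case correctly locates the difficulty in finding the right almost-spanning structure in the cluster $k$-graph, but the phrase ``about $\lceil k/(k-\ell)\rceil$ regular tuples meeting it'' is too loose to pin down the actual tiling/fractional-matching arguments used in the cited papers; getting the divisibility and end-tuple compatibility right is exactly where those papers do real work, and your proposal does not yet indicate how that is carried out.
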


The case $\ell=k-1$ of Theorem~\ref{codeg} was proved by R\"odl, Ruci\'nski and
Szemer\'edi~\cite{hh-rodl08}, confirming a conjecture of Katona and
Kierstead~\cite{hh-katona99}, by their innovative `absorbing method' (the approach we follow in this
paper). The same authors then showed that for the case $k=3$ and $\ell = 2$ the $\eta n$ error term can be removed~\cite{hh-rodl06}.
The remaining cases of Theorem~\ref{codeg} with $k-\ell$ divides $k$ follow immediately from the case $\ell = k-1$ , since if
$k-\ell$ divides~$n$ then any $(k-1)$-cycle of order $n$ contains an $\ell$-cycle on the same vertex
set as a subgraph. The remaining cases of Theorem~\ref{codeg} were subsequently proven in a sequence of papers by K\"uhn and Osthus~\cite{hh-kuhn06}, Keevash, K\"uhn, Mycroft, and Osthus~\cite{hh-keevash11}, H\`an and Schacht~\cite{hh-han10} and K\"uhn, Mycroft, and Osthus~\cite{hh-kuhn10}, while more recently Han and Zhao~\cite{han15} showed that the $\eta n$ error term can also be removed for $\ell < k/2$.

%The case $\ell = 1$ was then proven for $k=3$ by K\"uhn and
%Osthus~\cite{hh-kuhn06} and then for any $k$ by Keevash, K\"uhn, Mycroft, and
%Osthus~\cite{hh-keevash11}.  Independently and simultaneously with the latter, H\`an and
%Schacht~\cite{hh-han10} proved Theorem~\ref{codeg} for $\ell < k/2$, and conjectured the remaining
%cases of Theorem~\ref{codeg}.  This conjecture was subsequently verified by K\"uhn, Mycroft, and
%Osthus~\cite{hh-kuhn10}, who proved all outstanding cases of Theorem~\ref{codeg} by applying strong
%hypergraph regularity in conjunction with the absorbing method.

Much less is known for other minimum degree conditions. In particular, an analogous result to 
Theorem~\ref{codeg} for minimum vertex degree conditions is only known for $k=3$ and $\ell = 1$. 
This is the following theorem due to Han and Zhao~\cite{hh-han14} (an asymptotic version of this 
result was given previously by Bu\ss, H\`an and Schacht~\cite{hh-bus13}).

\begin{thm} \label{vdeg}
  There exists~$n_0$ such that if $n \geq n_0$ is even and $H$ is a $3$-graph on $n$ vertices with
  $$\delta_{1}(H) \geq \binom{n-1}{2} - \binom{\lfloor \frac{3n}{4}\rfloor}{2} + 2 - n \bmod{2}$$
  then $H$ contains a Hamilton $1$-cycle.
\end{thm}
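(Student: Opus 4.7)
The plan is to follow the absorbing method combined with a stability/extremal dichotomy, which is the standard route for exact Dirac-type theorems. First one identifies the near-extremal configuration that motivates the bound: take a partition $V(H) = A \cup B$ with $|A| = \lceil n/4 \rceil - 1$ and include every $3$-edge meeting $A$. A loose Hamilton cycle on $n$ vertices has $n/2$ edges, each of which must contain a vertex of $A$; since each vertex lies in at most two edges of the cycle, $|A| \geq n/4$ is necessary, so this construction contains no loose Hamilton cycle. Computing $\delta_1$ for it yields the bound of Theorem~\ref{vdeg} up to the additive constant, which motivates a dichotomy: either $H$ is $\alpha$-close to such a partition (the \emph{extremal case}) or $\alpha$-far from every such partition (the \emph{non-extremal case}), for a sufficiently small constant $\alpha>0$.

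In the non-extremal case I would combine four standard ingredients. An \emph{Absorbing Lemma} builds a short loose path $P_{\mathrm{abs}}$ whose vertex set $X$ has the property that, for every sufficiently small set $U \subseteq V(H) \setminus X$ of even size, $X \cup U$ spans a loose path with the same endpoints as $P_{\mathrm{abs}}$; this is obtained by showing that each vertex $v$ has polynomially many local absorbing gadgets and then taking a random sub-collection. A \emph{Reservoir Lemma} sets aside a small random vertex set $R$ used for later connections. A \emph{Connecting Lemma} shows that any two short loose paths in $H-R$ whose endpoints satisfy a mild degree condition can be linked by a constant-length loose path through $R$. Finally, a \emph{Long Path Lemma} produces a loose path covering all but $o(n)$ vertices of $V(H) \setminus X \setminus R$. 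Concatenating $P_{\mathrm{abs}}$ with the long path via connections through $R$ and then closing up leaves only a small set $U$ of vertices uncovered, which $P_{\mathrm{abs}}$ absorbs to complete the loose Hamilton cycle.

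In the extremal case, $V(H)$ admits a partition $A \cup B$ with $|A|$ very close to $n/4$ and only $o(n^3)$ edges inside $B$. The extra $+2$ in the hypothesised degree bound is exactly what forces $|A| \geq \lceil n/4 \rceil$, ruling out the pure extremal construction. A direct greedy construction then suffices: order the vertices of $A$ around the cycle so that each plays the role of the shared vertex between two consecutive loose edges, and fill in the remaining pairs from $B$ using crossing edges of the form $\{a,b_1,b_2\}$ with $a\in A$ and $b_1,b_2\in B$. A bipartite matching (or defect-form) argument handles the few problematic vertices, namely those in $B$ that lie in only a small number of crossing edges, together with the parity constraint imposed by $n$ being even.

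The main obstacle is the Absorbing Lemma under a minimum vertex degree hypothesis alone. With a codegree condition one builds absorbers locally via common neighbours of pairs, but minimum vertex degree only gives $\Omega(n^2)$ edges at each vertex, and each absorbing gadget must therefore be counted by an indirect supersaturation-style argument that uses the non-extremal hypothesis in an essential way to prevent losing a factor of $n$. Matching the constants in this absorbing argument with the sharp extremal analysis precisely enough to replace the $\eta n$ error term present in the asymptotic version due to Bu\ss, H\`an and Schacht by the additive constant $+2$ is the delicate point of the whole proof.
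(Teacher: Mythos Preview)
The paper does not prove Theorem~\ref{vdeg}. It is quoted in the introduction as a known result due to Han and Zhao~\cite{hh-han14} (with the asymptotic version attributed to Bu\ss, H\`an and Schacht~\cite{hh-bus13}), and no proof or proof sketch is given anywhere in the paper. The paper's own contribution is Theorem~\ref{main}, concerning loose Hamilton cycles in quasirandom $k$-graphs, and all of Section~\ref{sec:absorbing} and Section~\ref{sec:keylemmas} are devoted to that theorem, not to Theorem~\ref{vdeg}. So there is nothing in the paper to compare your proposal against.

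For what it is worth, your outline broadly matches the strategy that Han and Zhao actually use: a non-extremal/extremal dichotomy, with the non-extremal case handled by the absorbing machinery of Bu\ss, H\`an and Schacht (absorbing lemma, reservoir, path cover, connecting) and the extremal case handled by a direct structural argument exploiting the near-bipartite structure. Your identification of the extremal construction and of the reason the additive constant arises is on the right track. That said, what you have written is only a plan, not a proof; the genuinely hard parts (the absorbing lemma under a vertex-degree hypothesis, and the detailed extremal analysis that pins down the exact additive constant) are each substantial pieces of work that you have only named rather than carried out.
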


\subsection{Quasirandomness conditions} \label{sec:introquasicond}

Another type of sufficient condition for the existence of a Hamilton $\ell$-cycle in a $k$-graph $H$ is a quasirandomness condition, where we assume that $H$ satisfies some property (or properties) typical of a random $k$-graph on the same vertex set; there are many candidate properties which could be considered. In this vein, Frieze and Krivelevich~\cite{Frieze12a} defined a form of quasirandomness for $k$-graphs termed $(p, \eps)$-regularity, and proved for $1 \leq \ell \leq k/2$ that not only must any $(p, \eps)$-regular $k$-graph $H$ contain a Hamilton $\ell$-cycle, but in fact that any such $H$ contains a collection of edge-disjoint Hamilton $\ell$-cycles covering almost all of the edges of $H$. Frieze, Krivelevich and Loh~\cite{Frieze12b} then proved an analogous result for tight Hamilton cycles in $3$-graphs, following which Bal and Frieze~\cite{Bal12} proved an analogous result for Hamilton $\ell$-cycles in $k$-graphs with $k/2 < \ell \leq k-1$ (each of the latter two papers used a somewhat different definition of $(p, \eps)$-regularity tailored to the problem in question).

However, the notions of quasirandomness used for these results are very strong; each involves a 
`generalised codegree condition' which gives the approximate size of the intersection of the 
neighbourhoods of a small number of sets of vertices. Weaker notions of quasirandomness in hypergraphs have also been studied; Lenz and Mubayi~\cite{hqsi-lenz-poset12} determined the poset of implications between many such notions, and also demonstrated that each such notion is equivalent to the existence of a large spectral gap for an appropriate definition of first and second eigenvalues~\cite{hqsi-lenz-quasi12}. In this paper we consider the weakest of these forms of quasirandomness, which Lenz and Mubayi referred to as {\tt Expand}$_p$[1+\dots+1], and which is a natural generalisation to $k$-graphs of a notion of
quasirandomness for graphs that originated in early work of
Thomason~\cite{qsi-thomason87,qsi-thomason87-2} and Chung, Graham and Wilson~\cite{qsi-chung89}. 

\begin{definition}
  Let $k \geq 2$, let $0<\mu,p<1$, and let $H$ be a $k$-graph on $n$ vertices.  We say that $H$ is \emph{$(p,\mu)$-dense}
  if for any $X_1, \dots, X_k \subseteq V(H)$ we have
  \begin{align*}
    e(X_1,\dots,X_k) \geq p |X_1| \cdots |X_k| - \mu n^k,
  \end{align*}
  where $e(X_1,\dots,X_k)$ is the number of $k$-tuples $(x_1,\dots,x_k) \in X_1 \times \cdots \times X_k$ such
  that $\{x_1,\dots,x_k\} \in H$ (note that if the sets $X_i$ overlap an edge might be counted more than
  once). We say that $H$ is an \emph{$(n,p,\mu)$ $k$-graph} if $H$ has $n$ vertices and is $(p,\mu)$-dense.
  Finally, for  $0 < \alpha < 1$, an \emph{$(n,p,\mu,\alpha)$ $k$-graph} is an $(n,p,\mu)$ $k$-graph $H$ 
  which also satisfies $\delta_1(H) \geq \alpha \binom{n}{k-1}$.
\end{definition}

Note that an $(n, p, \mu)$ $k$-graph may contain isolated vertices, so this notion of quasirandomness by itself does not imply the existence of even a single Hamilton $\ell$-cycle. Lenz and Mubayi~\cite{pp-lenz14} previously studied perfect packings in $(n, p, \mu, \alpha)$ $k$-graphs, showing that any such $k$-graph must contain a perfect $F$-packing for any fixed linear $k$-graph $F$ (for $n$ sufficiently large, $\mu$ sufficiently small, and subject to certain natural divisibility conditions); they then continued this line of research through similar packing results for various related notions of quasirandomness in~\cite{pp-lenz14-2}.

\subsection{New results} \label{sec:new}

Our main result in this paper is the following theorem, which states that any $k$-graph satisfying our quasirandomness condition, whose minimum vertex degree is not too small, must contain a loose Hamilton cycle. This is the first example of a connected spanning structure whose existence is guaranteed by this notion of quasirandomness.

\begin{thm} \label{main}
  Let $k \geq 2$. For any $0 < p, \alpha < 1$ there exist $n_0$ and $\mu >0$ such that if $H$ is an
  $(n, p, \mu, \alpha)$ $k$-graph, where $n \geq n_0$ is divisible by $k-1$, then $H$ contains a
  Hamilton $1$-cycle.
\end{thm}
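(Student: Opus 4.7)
The plan is to follow the absorbing method of R\"odl, Ruci\'nski and Szemer\'edi~\cite{hh-rodl08}, building the Hamilton $1$-cycle as a concatenation of three pieces: a short absorbing loose path $P_A$ capable of incorporating any small set of leftover vertices, a small random reservoir set $R \subseteq V(H)$ used for short connections, and an almost-spanning loose path on $V(H) \setminus (V(P_A) \cup R)$, with all pieces stitched together through~$R$. To implement this I would first establish three auxiliary lemmas whose proofs rest on the $(p,\mu)$-dense property combined with the minimum vertex degree condition $\delta_1(H) \geq \alpha\binom{n}{k-1}$.

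The \emph{Connecting Lemma} should say that for almost every pair of vertices $u,v\in V(H)$ and every sufficiently large set $W \subseteq V(H)$ there exist many loose paths of bounded length from $u$ to $v$ with interior vertices in $W$. I would prove this by iterated extension: if a loose path currently ends at vertex $w$, the link $k$-graph of $w$ inherits enough density from the $(p,\mu)$-dense hypothesis to provide $\Omega(n^{k-1})$ new edges into~$W$, except for a negligible set of vertices which can be avoided. The \emph{Absorbing Lemma} would then identify, for each $v \in V(H)$, a constant-size loose path gadget on fixed endpoints admitting two realisations (one using $v$, one not), count its copies via $(p,\mu)$-density to show that every vertex has $\Omega(n^{c})$ absorbers, and use probabilistic selection together with the Connecting Lemma to splice the chosen absorbers into a single sublinear loose path $P_A$ that retains $\Omega(n)$ absorbers for \emph{every} vertex simultaneously. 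An \emph{Almost-Cover Lemma}, proved by greedy extension using the same machinery, would finally show that any sub-$k$-graph of $H$ of density $\geq p - o(1)$ and adequate minimum degree can be covered by a bounded number of long loose paths up to an $o(n)$ error.

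With these tools in hand the assembly is standard: choose $R$ randomly so that it still supports the connecting property; construct $P_A$; apply the Almost-Cover Lemma to $V(H)\setminus (V(P_A)\cup R)$; use $R$ to join the resulting $O(1)$ loose paths and $P_A$ into a single loose cycle through the Connecting Lemma; and finally invoke the absorbing property of $P_A$ to swallow the $o(n)$ leftover vertices (comprising the uncovered vertices and the unused part of $R$). The divisibility hypothesis $(k-1)\mid n$ guarantees that the leftover set has size divisible by $k-1$, so that absorption yields a valid loose cycle of the correct length.

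The main obstacle is proving the Connecting and Absorbing Lemmas under so weak a hypothesis. Codegree-based proofs have direct control over the degrees of $(k-1)$-sets, whereas here only a global density property and a vertex-degree bound are available. The technical heart of the argument is therefore an \emph{inheritance} phenomenon: for most small interface sets of vertices, the link hypergraph is still $(p',\mu')$-dense for comparable parameters, and the rare atypical tuples can be banned from all constructions. Getting the bookkeeping right for iterated inheritance while routing loose paths, and showing that the minimum vertex degree condition (which prevents isolated vertices, a scenario left open by $(p,\mu)$-density alone) is enough to avoid obstructions at the absorption step, is where the work will concentrate.
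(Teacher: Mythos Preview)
Your overall framework—absorbing path, reservoir, almost-cover, then stitch and absorb—is exactly the paper's. The gap is in how you propose to prove the auxiliary lemmas. The claim that ``the link $k$-graph of $w$ inherits enough density from the $(p,\mu)$-dense hypothesis'' is false: taking one of the sets $X_i$ to be a singleton in the definition of $(p,\mu)$-density yields the lower bound $p|X_1|\cdots|X_{k-1}| - \mu n^k$, which is negative, so the density hypothesis says nothing whatsoever about any individual link. The minimum vertex degree tells you the link has $\alpha\binom{n}{k-1}$ edges, but not where they lie. This is presumably why your Connecting Lemma is stated only for ``almost every pair''; but in the assembly you must connect \emph{specific} endpoints (those of the absorbing path and of the covering paths), and for absorption you need gadgets for \emph{every} leftover vertex, so excluding a negligible bad set does not suffice.

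The paper's engine is instead an Extension Lemma (a special case of a result of Lenz and Mubayi): if $F$ is a linear $k$-graph in which every edge contains at most one ``anchor'' vertex and edges touching anchors are pairwise disjoint outside the anchor set, then for \emph{any} images $y_1,\dots,y_m$ of the anchors in $H$ there are at least $\alpha^{\sum d_F(s_i)} p^{|F|-\sum d_F(s_i)} n^{f-m} - \gamma n^{f-m}$ rooted copies of $F$. The $\alpha$-factors handle edges through the anchors using only the vertex-degree bound; the $p$-factors handle the remaining edges via global $(p,\mu)$-density—no link inheritance is invoked. With $F$ a length-three loose path anchored at its ends this gives $\Omega(n^{3k-4})$ connecting sets for \emph{every} pair $(u,v)$. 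For absorption the paper constructs an explicit gadget on $3k^2-7k+5$ vertices that absorbs a $(k-1)$-set $S$ and is engineered so that each edge meets $S$ in at most one vertex and $S$-touching edges are pairwise disjoint off $S$—exactly the Extension Lemma's hypotheses. Designing this gadget (the paper's Lemma~9) is where the real work sits, and your proposal does not address it.
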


\begin{remarks} \hspace{1cm}

\begin{itemize}
  \item The minimum vertex degree condition cannot be dropped from the statement of
    Theorem~\ref{main}.  Indeed, fix $p \in (0,1)$, let $f(n) = o(n)$ and consider the following $k$-graph sequence: take the
    disjoint union of the random $k$-graph $G^{(k)}(n - f(n),p)$ and a clique of size $f(n)$.  The minimum vertex
    degree is $\binom{f(n)-1}{k-1}$, there is no Hamilton $1$-cycle, and for all $\mu>0$, the $k$-graph is still
    $(p,\mu)$-dense with high probability.

  \item It is not true that a $k$-graph $H$ which satisfies the conditions of Theorem~\ref{main}
    must contain edge-disjoint Hamilton $1$-cycles covering almost all of the edges of $H$ (as was
    the case for the $(p, \eps)$-regular $k$-graphs discussed in Section~\ref{sec:introquasicond}). For example, let $H$ be formed
    from the complete $k$-graph on $n$ vertices by fixing a vertex $v$ and arbitrarily deleting
    $\tfrac{2}{3}\binom{n-1}{k-1}$ edges which contain $v$. Then, providing $n$ is sufficiently
    large compared to $\mu$, $H$ is an $(n, \tfrac{1}{2}, \mu, \tfrac{1}{4})$ $k$-graph.
    Furthermore, the size of any collection of edge-disjoint $1$-cycles in $H$ is at most the degree
    of $v$, that is, $\tfrac{1}{3}\binom{n-1}{k-1}$. Since a Hamilton $1$-cycle contains
    $\tfrac{n}{k-1}$ edges, we conclude that any collection of edge-disjoint Hamilton $1$-cycles in
    $H$ covers at most $\tfrac{n}{3(k-1)}\binom{n-1}{k-1} = \tfrac{k}{3(k-1)}\binom{n}{k}$ edges,
    that is, at most around two-thirds of the edges of $H$.

  \item The definition of $(p,\mu)$-dense cannot be changed to  $e(X) \geq p\binom{|X|}{k} -
    \mu n^k$ for all vertex sets $X$, where $e(X)$ is the number of edges in $X$.  Indeed, consider the disjoint union $G$ of two cliques of size
    $n/2$.  For all $\mu > 0$ and $X \subseteq V(G)$, we have $e(X) \geq
    2^{-k} \binom{|X|}{k} - \mu n^k$ for large $n$.  Clearly $G$ has minimum vertex degree $\Omega(n^{k-1})$ and no Hamilton
    $1$-cycle.

  \item Our proof of Theorem~\ref{main} is valid for graphs (i.e.~when $k=2$), but Theorem~\ref{main} was already known in this case. Indeed, K\"uhn, Osthus and Treglown~\cite{KOT} showed that any large `robust expander' graph with high minimum degree must contain a Hamilton cycle, and it is straightforward to show that any $(n, p, \mu, \alpha)$ $2$-graph has this form (actually their result was stated for directed graphs, but the analogue for undirected graphs follows immediately). Very recently a proof of this result which does not use graph regularity (and so applies for much smaller graphs) was given by Lo and Patel~\cite{LP}. Other `expansion' properties which ensure the existence of Hamilton cycles in graphs were established by Hefetz, Krivelevich and Szab\'o~\cite{HKS}. Our results can be seen as providing a hypergraph analogue of these results; our quasirandomness condition is an `expansion' property which guarantees the existance of a Hamilton cycle.

  \item We do not treat the case $p = o(1)$ as $n \rightarrow \infty$, but the techniques likely
    extend to the sparse setting.
\end{itemize}
\end{remarks}

It is natural to ask whether analogous results to Theorem~\ref{main} hold for Hamilton $\ell$-cycles when $\ell \geq 2$. Since consecutive edges of a Hamilton $\ell$-cycle intersect in $\ell$ vertices, a necessary prerequisite for this is that we strengthen the minimum vertex degree condition $\delta_1(H) \geq \alpha \binom{n}{k-1}$ of Theorem~\ref{main} to a minimum $\ell$-degree condition $\delta_\ell(H) \geq \alpha \binom{n}{k-\ell}$. Indeed, in Section~\ref{sec:constr} we give a construction of an $(n, p, \mu)$ $k$-graph $H$ with $\delta_{j}(H) \geq \alpha\binom{n}{k-j}$ for any $1 \leq j \leq \ell-1$ which contains a vertex $x$ such that the intersection of any edge containing $x$ and any edge not containing $x$ has size at most $\ell-1$; it follows that $H$ contains no Hamilton $\ell$-cycle. 

Interestingly, in the case where $k \geq 3$ and $k-\ell$ divides $k$, the analogous statement to Theorem~\ref{main} for Hamilton $\ell$-cycles does not hold even after this strengthening (i.e. assuming also that $H$ satisfies the minimum $\ell$-degree condition $\delta_\ell(H) \geq \alpha \binom{n}{k-\ell}$). Indeed, by adapting a construction of Lenz and Mubayi~\cite{pp-lenz14, pp-lenz14-2} we prove the following proposition.

\begin{prop} \label{prop:constr}
Let $k$ and $\ell$ be integers such that $k \geq 3$, $2 \leq \ell \leq k-1$ and $k-\ell$ 
divides $k$. Then for any $\mu > 0$ there exists $n_0$ such that for any $n \geq n_0$ which is 
divisible by $2k$, there is a $k$-graph $H$ on $n$ vertices such that
  \begin{enumerate}[noitemsep, label=(\alph*)]
    \item $H$ is $(2^{-\binom{k}{\ell}},\mu)$-dense,
    \item $\delta_j(H) \geq (2^{-\binom{k}{\ell}} - \mu) \binom{n}{k-j}$ for any $1 \leq j \leq \ell$,
    \item $H$ does not contain a Hamilton $\ell$-cycle.
  \end{enumerate}
\end{prop}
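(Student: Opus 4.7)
Since $k-\ell$ divides $k$, setting $s = k/(k-\ell) \geq 2$, any Hamilton $\ell$-cycle in a $k$-graph contains a perfect matching: taking every $s$-th edge around the cycle produces $n/k$ pairwise disjoint edges covering $V$. It therefore suffices to construct an $(n,p,\mu)$-dense $k$-graph $H$ with $p = 2^{-\binom{k}{\ell}}$, satisfying $\delta_j(H) \geq (p - \mu)\binom{n}{k-j}$ for all $1 \le j \leq \ell$, and with no perfect matching.

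To do this, I would adapt the Lenz--Mubayi construction~\cite{pp-lenz14, pp-lenz14-2} by combining a balanced bipartition with a randomised parity-type obstruction. Fix $V = A \cup B$ with $|A|$ close to $n/2$ and chosen to have a residue modulo some integer $m$ (depending on $k$ and $\ell$) that is incompatible with a perfect matching of a prescribed shape (for instance, $|A|$ odd, so that a perfect matching whose edges all have $|e \cap A|$ even would be impossible). Assign each vertex an independent uniform label in a finite abelian group $\Gamma$ of order $2^{\binom{k}{\ell}}$, and define $H$ to consist of those $k$-sets $e$ whose labels sum to a prescribed element of $\Gamma$, subject also to a constraint tying the intersection pattern $|e \cap A|$ to a subset $S \subseteq \{0, 1, \ldots, k\}$. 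This is designed so that each $k$-set has probability exactly $2^{-\binom{k}{\ell}} = p$ of being included, and so that membership of $e$ in $H$ forces $|e \cap A| \in S$.

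Verification of each item then proceeds as follows. For (a), expand the indicator of $\{e \in H\}$ as a character sum over $\widehat{\Gamma}$: the trivial character contributes the main term $p |X_1| \cdots |X_k|$, while non-trivial characters give error terms bounded by concentration for the random labelling, yielding $e(X_1,\dots,X_k) \geq p |X_1| \cdots |X_k| - \mu n^k$. For (b), apply the same character-sum argument to extensions of a fixed $j$-set $T$, using that $j \leq \ell \leq k-1$ leaves enough ``link size'' for approximately $p$-dense extensions. For (c), if $M = \{e_1, \ldots, e_{n/k}\}$ were a perfect matching in $H$, then $\sum_i |e_i \cap A| = |A|$; each summand lies in $S$, so this equals a sum of $n/k$ elements of $S$, which by our choice of $|A|$ modulo $m$ is impossible.

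The main obstacle is ensuring $(p,\mu)$-density uniformly for all test tuples $(X_1, \ldots, X_k)$, particularly when some $X_j$ is adversarially aligned with the random labelling (e.g., $X_j$ being the preimage of a coset of the kernel of some character of $\Gamma$). A naive single-parity rule such as ``$|e \cap A|$ even'' forces $e(X_1, \ldots, X_k) = 0$ when the $X_j$'s force $|e \cap A|$ to have the wrong parity, violating $(p,\mu)$-density. Overcoming this requires combining the $\binom{k}{\ell}$ parity conditions so that no alignment of $X_j$'s can simultaneously kill all of the character-sum error terms; the verification of the minimum-$\ell$-degree condition, at the boundary case $j = \ell$, is analogously delicate and requires a tight counting argument to show that the link of every $\ell$-set remains $(p-\mu)$-dense despite the parity constraints.
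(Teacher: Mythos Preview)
Your reduction to ``no perfect matching'' is a genuine gap, and in fact this route cannot succeed. Conditions (a) and (b) of the proposition (with $j=1$) make $H$ an $(n,p,\mu,p-\mu)$ $k$-graph, and the very Lenz--Mubayi result cited in the introduction shows that every such $k$-graph contains a perfect $F$-packing for any linear $F$; a single edge is linear, so $H$ \emph{must} contain a perfect matching whenever $k \mid n$. Thus no construction satisfying (a) and (b) can lack a perfect matching, and (c) cannot be obtained this way. You correctly diagnosed the symptom --- a hard constraint $|e\cap A|\in S$ destroys $(p,\mu)$-density --- but the underlying reason is that the obstruction must exploit the cyclic overlap structure, not merely the matching structure.

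The paper's construction avoids this by never restricting the parity of $|e\cap X|$. One takes a bipartition $V=X\cup Y$ with $|X|$ odd and an auxiliary random $\ell$-graph $G=G^{(\ell)}(n,\tfrac12)$, and declares a $k$-set $e$ an edge if either $|e\cap X|$ is even and $e$ induces a clique in $G$, or $|e\cap X|$ is odd and $e$ induces an independent set in $G$. Both parities occur among edges, so the density obstruction you flagged does not arise, and (a), (b) follow by standard concentration as in \cite{pp-lenz14}. For (c) one uses that two consecutive edges $e_i,e_{i+1}$ of an $\ell$-cycle share an $\ell$-set; this $\ell$-set is in $G$ or not, forcing $e_i$ and $e_{i+1}$ to have the same type (clique/independent) and hence the same parity of $|\cdot\cap X|$. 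Propagating this parity through the blocks $B_i=e_i\setminus e_{i+1}$ around the cycle, and using that $n/k$ is even, one concludes $|X|$ is even, a contradiction. The key idea you are missing is to let the parity of $|e\cap X|$ \emph{select} which random condition applies, rather than act as a membership filter.
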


In particular, we cannot guarantee the existence of a tight Hamilton cycle in an $(n, p, \mu, \alpha)$ $k$-graph $H$ even if we also assume that $H$ has minimum codegree $\delta_{k-1}(H) \geq \alpha n$. 

In summary, we have shown that any $(n, p, \mu)$ $k$-graph $H$ with $\delta_\ell(H) \geq \alpha \binom{n}{k-1}$ must contain a Hamilton $\ell$-cycle if $\ell = 1$, whilst for $k \geq 3$ this statement is false if $k-\ell$ divides~$k$. This leaves the remaining cases as an interesting open problem.

\begin{problem}
 Fix $\ell>1$ and  $k \ge 4$ such that $k-\ell$ does not divide $k$.  Is the following statement true?
For all $0 < p, \alpha < 1$ there exist $n_0$ and $\mu >0$ such that if $H$ is an
  $(n, p, \mu)$ $k$-graph with $\delta_\ell(H) \geq \alpha \binom{n}{k-\ell}$, 
where $n \geq n_0$ is divisible by $k-\ell$, then $H$ contains a Hamilton $\ell$-cycle. 
\end{problem}

\subsection{Structure and notation of this paper}

The remainder of this paper is organized as follows. In Section~\ref{sec:absorbing}, we outline the `absorbing method' introduced by R\"odl, Ruci\'nski and Szemer\'edi, and state three key lemmas needed to apply this method in the context of Theorem~\ref{main}. We then combine these key lemmas to prove Theorem~\ref{main}. In Section~\ref{sec:keylemmas} we prove each of the three key lemmas, for which our main tool is an extension lemma for quasirandom $k$-graphs developed by Lenz and Mubayi~\cite{pp-lenz14}. Finally, in Section~\ref{sec:constr} we prove Proposition~\ref{prop:constr}.

Throughout this paper we identify a $k$-graph $H$ with its edge set, for example writing $|H|$ for the number of edges of $H$ and $e \in H$ to mean $e \in E(H)$. For a set $A$ we write $\binom{A}{k}$ to denote the collection of subsets of $A$ of size $k$. We omit floors and ceilings wherever these do not affect the argument.

\section{The absorbing method} \label{sec:absorbing}

Loosely speaking, the absorbing method proceeds as follows to find a Hamilton cycle in a $k$-graph $H$ (formal definitions will follow this outline). First, we find an `absorbing path' in $H$. This is a path $P_0$ which can `absorb' any small set $S$ of vertices of $H$ not in $P_0$, meaning that for any such $S$ there is a path $Q$ with vertex set $V(P_0) \cup S$ which has the same endvertices as $P_0$. Next, we cover almost all of the remaining vertices of $H$ with vertex-disjoint paths $P_1, \dots, P_r$ which do not intersect $P_0$. Having done this, we find `connecting' paths $Q_0, \dots, Q_r$ which are vertex-disjoint from each other and have only endvertices in common with the paths $P_0, \dots, P_r$; these endvertices are chosen so that $P_0, Q_0, P_1, Q_1, \dots, P_r, Q_r$ forms a cycle $C$. Since the paths $P_0, P_1, \dots, P_r$ covered almost all of the vertices of $H$, only a small number of vertices of $H$ are not in $C$, so we can apply the absorbing property of $P_0$ to `absorb' these vertices and so give a Hamilton cycle in $H$. The fact that we can achieve each of these steps is guaranteed by our three key lemmas.

A \emph{loose path} $P$ is a $k$-graph whose vertices can be
linearly ordered in such a way that each edge of $P$ consists of $k$ consecutive vertices, and so
that each edge intersects the subsequent edge in precisely one vertex. Since this is the only type 
of path we will discuss in this paper, we will refer to such paths simply as paths. The \emph{length} 
of a path is the number of edges it contains. If $P$ is a path, an \emph{endvertex pair} of $P$ is a 
pair of vertices $(x,y)$ such that $x$ is a degree one vertex in the first edge of $P$ and $y$ is a degree one vertex 
in the last edge of $P$. (Note there are multiple endvertex pairs if $k > 2$.)

Our first key lemma states that, under the conditions of Theorem~\ref{main}, we can find an `absorbing path'.

\newcommand{\absorbingconst}{c_{\ref{absorbingpath} }}

\begin{lemma}[Absorbing path lemma] \label{absorbingpath}
  Fix $k \geq 2$, $0 < p,\alpha < 1$, and $0 < \eps < \frac{\alpha^4p^2}{400k^2}$. There exists
  $\absorbingconst > 0$ depending only on $p$, $\alpha$, and $k$ and there exist $\mu > 0$ and
  $n_0$ depending on $p$, $\alpha$, $k$, and $\eps$ such that the following holds.  Let $H$ be
  an $(n, p, \mu, \alpha)$ $k$-graph with $n \geq n_0$. Then there exists a path $P$ in $H$ with
  endvertex pair $(u,v)$ with at most $\eps n$ vertices, with the property that for any set $X
  \subseteq V(H) \sm V(P)$ such that $k-1$ divides $|X|$ and $|X| \leq \absorbingconst \eps n$,
  there is a path $P^*$ in $H$ such that $V(P^*) = V(P) \cup X$ and $P^*$ has endvertex pair
  $(u,v)$.
\end{lemma}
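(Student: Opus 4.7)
My plan is to adapt the absorbing method of Rödl, Ruciński, and Szemerédi to this quasirandom setting, exploiting the $(p,\mu)$-density via the Lenz-Mubayi extension lemma~\cite{pp-lenz14}.

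I will take $P$ to be a loose path in $H$ and regard each edge of $P$ as an ordered tuple $(a_1, \dots, a_k)$, with $a_1$ the junction with the previous edge and $a_k$ the junction with the next edge. Given a $(k-1)$-set $T \subseteq V(H) \setminus V(P)$, I call a slot $e$ \emph{$T$-compatible} if there is an ordering $(t_1, \dots, t_{k-1})$ of $T$ for which both $\{a_1, t_1, \dots, t_{k-1}\}$ and $\{t_{k-1}, a_2, \dots, a_k\}$ are edges of $H$. Replacing $e$ with these two edges inside $P$ yields a loose path on $V(P) \cup T$ with the same endvertex pair, leaving the rest of $P$ untouched. The whole absorption scheme thus reduces to building $P$ so that, for every $T$ we might ever wish to absorb, $P$ contains many $T$-compatible slots.

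For any fixed $(k-1)$-set $T$, I would use the extension lemma to count tuples $(a_1, \dots, a_k, t)$, disjoint from $T$ with $t \in T$, for which all three edges $\{a_1, \dots, a_k\}$, $\{a_1\} \cup T$ and $\{t, a_2, \dots, a_k\}$ lie in $H$. The extension lemma applied to this fixed three-edge template, combined with the $(p,\mu)$-density used for each edge and the minimum vertex degree $\delta_1(H) \geq \alpha \binom{n}{k-1}$ used to start the extension at $a_1$, gives a lower bound of $\gamma n^k$ configurations with $\gamma = \gamma(p,\alpha,k) > 0$; the factor $\alpha^4 p^2$ appearing in the hypothesis on $\eps$ is consistent with this kind of density estimate once one accounts for slack absorbed into $\mu$.

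Next I would construct $P$ by a random greedy procedure of length $\eps n /(2k)$, so that $|V(P)| \leq \eps n$. Using the count above together with a Chernoff- or Azuma-type concentration inequality, for each fixed $T$ disjoint from $V(P)$ the number of $T$-compatible slots in $P$ is at least $2\absorbingconst \eps n/(k-1)$ with probability at least $1 - \exp(-\Omega(\eps n))$; since $\binom{n}{k-1}\exp(-\Omega(\eps n)) = o(1)$, a union bound yields a single $P$ that is good for every $T$ simultaneously. To finish, given any $X$ with $(k-1) \mid |X| \leq \absorbingconst \eps n$ disjoint from $V(P)$, I partition $X$ into $(k-1)$-sets $T_1, \dots, T_m$ with $m \leq \absorbingconst \eps n/(k-1)$ and sequentially use a previously unused $T_i$-compatible slot to absorb each $T_i$; since each $T_i$ has at least $2\absorbingconst \eps n/(k-1) > m$ compatible slots in $P$, the greedy absorption never fails, and the final path $P^*$ has vertex set $V(P) \cup X$ and the same endvertex pair as $P$.

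The main obstacle is the third step: one must push the extension-lemma lower bound through a randomised greedy construction of $P$ and obtain concentration strong enough to survive the union bound over all $\binom{n}{k-1}$ choices of $T$. Controlling the dependencies between slot choices along the path likely requires either a martingale argument (for instance Azuma's inequality applied to a Doob martingale based on the greedy choices) or a two-stage randomisation in which a \emph{reservoir} of $\Theta(\eps n)$ random vertices is first chosen, the $(p,\mu')$-denseness is shown to be inherited by the reservoir with a mildly larger $\mu'$, and the slots are then extracted from the reservoir. Once the uniform lower bound on $T$-compatible slots is established, the greedy absorption and the accounting of $|V(P)| \leq \eps n$ are routine.
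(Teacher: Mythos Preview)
Your absorber template cannot be fed to the Extension Lemma. That lemma, in the form available here, applies only to \emph{linear} templates in which each edge meets the set of fixed (root) vertices in at most one point. Your three-edge gadget fails both constraints for $k \geq 3$: the edge $\{a_1\} \cup T$ contains all $k-1$ root vertices, violating condition~(a), and the edges $\{a_1,\dots,a_k\}$ and $\{t,a_2,\dots,a_k\}$ share the $k-1$ vertices $a_2,\dots,a_k$, violating condition~(b). The $(p,\mu)$-denseness hypothesis is the weakest notion in the Lenz--Mubayi poset and does not control counts of non-linear configurations, so there is no way to rescue this count by a direct appeal to the extension lemma. This is precisely why the paper designs a bespoke absorber $F$ with $3k^2-7k+5$ vertices and $5k-7$ edges (Lemma~\ref{lem:constrofF}): every pair of edges of $F$ meets in at most one vertex, each vertex of the root set $S$ lies in exactly two edges of $F$ and no edge contains two roots, and $F$ admits spanning loose $(u,v)$-paths both with and without $S$. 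Constructing such a linear absorber is the main combinatorial content of the lemma, and your proposal skips it.

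The assembly of $P$ is also organised differently. Instead of a random greedy build of a path followed by a martingale argument on slot counts, the paper samples a random family $\mathcal{Z}$ of candidate absorbing $(3k^2-8k+6)$-sets independently, uses Chernoff and a union bound over all $\binom{n}{k-1}$ sets $Y$ to guarantee that each $Y$ has many absorbers in $\mathcal{Z}$, deletes the few intersecting pairs (controlled by Markov), and only then invokes the Connecting Lemma, with $\sqrt{\eps}$ in place of $\eps$, to thread the surviving absorbers into a single loose path; the hypothesis $\eps < \alpha^4 p^2/(400k^2)$ is there exactly so that $\sqrt{\eps} < \alpha^2 p/(20k)$ as required by the Connecting Lemma. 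This decouples the concentration step from any path structure and sidesteps the dependency issues you anticipated.
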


The next key lemma is a connecting lemma, stating that we can find a vertex set $B$ which is
disjoint from a given small set $A$ (i.e.\ the vertices of the absorbing path) such that for any small collection of pairs of vertices, we can use $B$ to find vertex-disjoint constant-length paths
with the given endvertex pairs.

\newcommand{\connectingconst}{c_{\ref{connecting} }}

\begin{lemma}[Connecting lemma] \label{connecting}
  Fix $k \geq 2$, $0 < p,\alpha < 1$, and $0 < \eps < \frac{\alpha^2p}{20k}$.  There exists
  $0 < \connectingconst < 1$ depending only on $p$, $\alpha$, and $k$ and there exist $\mu > 0$ and
  $n_0$ depending on $p$, $\alpha$, $k$, and $\eps$ such that the following holds.  Let $H$ be an
  $(n, p, \mu, \alpha)$ $k$-graph with $n \geq n_0$ and let $A \subseteq V(H)$ with $|A| \leq
  \eps n$.  Then there exists a set $B \subseteq V(H) \sm A$ with $|B| \leq \eps^2 n$ such
  that, for any $t \leq
  \connectingconst \eps^2 n$ and any $2t$ distinct vertices $u_1, \dots, u_t, v_1, \dots, v_t$ of $H$, 
there exist vertex-disjoint paths $Q_1, \dots,
  Q_t$ of length three such that, for each $1 \leq i \leq t$, $(u_i,v_i)$ is an endvertex pair of $Q_i$ and $V(Q_i) \subseteq B
  \cup \{u_i, v_i\}$.
\end{lemma}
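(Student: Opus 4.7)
The plan is to construct a reservoir $B$ of modest size and then connect the pairs greedily using length-$3$ loose paths through $B$. A length-$3$ loose path has $3k-2$ vertices, so each $Q_i$ has $3k-4$ internal vertices; with $t\leq\connectingconst\eps^2 n$ pairs, the total number of internal vertices consumed is at most $(3k-4)t$, plus at most $2t$ pair-endvertices that we must avoid. By choosing $\connectingconst$ small enough (for instance $\connectingconst=1/(20k)$), the total number of vertices of $B$ that must ever be excluded is at most $|B|/2$.

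The workhorse will be the extension lemma of Lenz and Mubayi~\cite{pp-lenz14} applied to the linear $k$-graph $F$ consisting of a length-$3$ loose path with its two degree-$1$ endvertices treated as pinned vertices. The consequence I would use is: for every $\beta>0$ there exist $c_0=c_0(p,\alpha,k,\beta)>0$ and $\mu_0=\mu_0(p,\alpha,k,\beta)>0$ such that whenever $H$ is an $(n,p,\mu,\alpha)$ $k$-graph with $\mu\leq\mu_0$, the following holds: for every pair of distinct vertices $u,v\in V(H)$ and every $W\subseteq V(H)\setminus\{u,v\}$ with $|W|\geq\beta n$, the number of length-$3$ loose paths in $H$ with endvertex pair $(u,v)$ and all $3k-4$ internal vertices in $W$ is at least $c_0 n^{3k-4}$. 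Taking $\beta=\eps^2/2$ furnishes exactly the quantitative input needed below.

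Given this, take $B$ to be any subset of $V(H)\setminus A$ of size $\lfloor\eps^2 n\rfloor$; this is possible since $|V(H)\setminus A|\geq(1-\eps)n$. Construct the paths $Q_1,\dots,Q_t$ iteratively. Before placing $Q_i$, let $B_i$ be $B$ minus (a) the $3k-4$ internal vertices of each $Q_j$ with $j<i$, and (b) any of the $u_j,v_j$ with $j\neq i$ that happen to lie in $B$. Then $|B_i|\geq|B|-(3k-4)(i-1)-2(t-1)\geq|B|-3kt\geq|B|/2\geq\eps^2 n/2$, so applying the path-count bound with $W=B_i$ produces at least one length-$3$ path from $u_i$ to $v_i$ whose internal vertices all lie in $B_i$; choose any such path to be $Q_i$. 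By construction $V(Q_i)\subseteq B\cup\{u_i,v_i\}$ and the paths are pairwise vertex-disjoint, as required.

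The main obstacle is supplying the path-count bound cited above, and in particular handling the restriction of the free vertices to a relatively small set $W$ of size only $\eps^2 n$. The $(p,\mu)$-dense condition carries an additive error $\mu n^k$, which swamps the natural main term $p|W|^k$ unless $\mu$ is taken small compared to $\eps^{2k}$; moreover, the minimum vertex degree hypothesis $\delta_1(H)\geq\alpha\binom{n}{k-1}$ is essential for ensuring that the first and last edges of each path (those containing the pinned vertices $u$ and $v$) can still be formed in enough ways regardless of how the neighbourhoods of $u$ and $v$ happen to intersect $W$. Both difficulties are overcome by choosing $\mu$ sufficiently small in terms of $\eps$, $p$, $\alpha$ and $k$, and by invoking the Lenz--Mubayi extension lemma in the form that permits the free vertices to be restricted to prescribed target sets.
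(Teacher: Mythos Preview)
Your proposal rests on a path-count bound that does not hold: it is not true that for every pair $(u,v)$ and every $W$ with $|W|\geq\beta n$ there are $\Omega(n^{3k-4})$ length-$3$ loose paths from $u$ to $v$ with all internal vertices in $W$. The hypothesis $\delta_1(H)\geq\alpha\binom{n}{k-1}$ says nothing about \emph{where} the link of $u$ lies. For instance, start with $G^{(k)}(n,p)$, fix a vertex $u$, delete all edges through $u$, and reinstate only those edges $\{u\}\cup S$ with $S\subseteq T$ for some fixed set $T$ of size about $\alpha^{1/(k-1)}n$. The resulting $H$ is still an $(n,p,\mu,\alpha)$ $k$-graph for small $\mu$ (only $O(n^{k-1})$ edges at a single vertex have been altered), yet if $W$ is chosen disjoint from $T$ there is not a single edge through $u$ with its remaining $k-1$ vertices in $W$, let alone a length-$3$ path. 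The full Lenz--Mubayi extension lemma does permit target sets for the free vertices, but it carries side hypotheses (their equations~(1) and~(2)) which, for edges incident to a pinned vertex, amount precisely to a lower bound on the degree of that pinned vertex \emph{into} the target sets --- and no such bound is available here. So the step ``take $B$ to be any subset of $V(H)\setminus A$ of size $\lfloor\eps^2 n\rfloor$'' cannot work.

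The paper avoids this obstacle by \emph{sampling} the reservoir. It first applies the extension lemma with all free vertices ranging over $V(H)$, obtaining at least $\frac{1}{2}\alpha^2 p\binom{n}{3k-4}$ connecting $(3k-4)$-sets for every pair $(u,v)$. It then includes each $(3k-4)$-subset of $V(H)\setminus A$ in a random family $\mathcal{B}$ independently with a small probability $q$, and uses Chernoff plus a union bound over all $\binom{n}{2}$ pairs to show that with positive probability $\mathcal{B}$ is small, has few intersecting members, and still contains $\Omega(\eps^2 n)$ connecting sets for \emph{every} pair simultaneously. After discarding intersecting members, $B$ is the union of the surviving sets, and the greedy connection succeeds because each pair has more surviving connecting sets in $\mathcal{B}$ than there are pairs to connect. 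The essential point is that random sampling retains a fixed proportion of each pair's connecting sets wherever in $V(H)$ those sets happen to sit, whereas a deterministic choice of $B$ gives no such guarantee for vertices whose links are concentrated elsewhere.
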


Finally, the path cover lemma states that we can cover almost all of the vertices of $H$ by a
constant number of vertex-disjoint paths.
 
\begin{lemma}[Path cover lemma] \label{pathcover}
  Fix $k \geq 2$, $0 < p < 1$, and $0 < \eps < \frac{p}{2k\cdot k!}$. There exist $\mu > 0$
  and $n_0$ such that the following holds.  If $H$ is an $(n, p, \mu)$ $k$-graph with $n \geq n_0$
  then there exists a collection $\Pa$ of at most $1/\eps^3$ vertex-disjoint paths in $H$ such
  that at most $\eps^2 n$ vertices of $H$ are not covered by any path $P \in \Pa$.
\end{lemma}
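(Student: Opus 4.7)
My plan is to build $\Pa$ greedily. Initialize $\Pa = \emptyset$ and $U := V(H) \setminus V(\Pa)$; while $|U| \geq \eps^2 n$, I find a loose path $P$ in $H[U]$ on at least $c|U|$ vertices, for an absolute constant $c = c(p,k) \in (0,1)$, and add $P$ to $\Pa$. Each iteration shrinks $|U|$ by a factor $1 - c$, so the procedure halts in $O(\log(1/\eps^2))$ steps; across the range $\eps \in (0, p/(2k\cdot k!))$ allowed by the hypothesis, the bound $O(\log(1/\eps^2))$ is comfortably below $1/\eps^3$ since $\eps^3 \log(1/\eps^2)$ is bounded above by an absolute constant on that range. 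At termination $|U| < \eps^2 n$ as required.

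The iteration is valid because $H[U]$ inherits quasirandomness from $H$: for any $X_1, \dots, X_k \subseteq U$ we have $e_{H[U]}(X_1, \dots, X_k) = e_H(X_1, \dots, X_k) \geq p|X_1|\cdots |X_k| - \mu n^k$, so viewing $H[U]$ as a $k$-graph on $|U| \geq \eps^2 n$ vertices, it is $(p, \mu/\eps^{2k})$-dense. Choosing $\mu$ small in terms of $p$, $k$, $\eps$ makes the error as small as we wish, reducing the proof to the following subclaim: every sufficiently large $(n', p, \mu')$ $k$-graph $G$, with $\mu'$ small in terms of $p$ and $k$, contains a loose path on at least $cn'$ vertices for some $c = c(p, k) > 0$.

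To prove the subclaim I would apply the Lenz--Mubayi extension lemma, which is the main tool singled out in the paper. Let $P$ be a longest loose path in $G$, set $W := V(G) \setminus V(P)$, and suppose for contradiction that $|V(P)| < cn'$. Then $|W|$ is linear in $n'$, yet by maximality neither endvertex pair of $P$ admits an edge $e \in G$ with $e \cap V(P)$ equal to a single endpoint and $e \setminus V(P) \subseteq W$. I would apply the extension lemma to the template consisting of a fixed-length loose path with one endpoint marked to produce, via P\'osa-style rotations along $P$, a set $S$ of alternative endpoints of $P$ with $|S| = \Omega(n')$; the density condition applied to $e(S, W, \dots, W)$ then forces this count to be at least $p|S||W|^{k-1} - \mu' (n')^k > 0$, contradicting maximality and hence proving $|V(P)| \geq cn'$.

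The main obstacle lies in the rotation step: loose hypergraph paths are more rigid than graph paths, since each rotation must preserve the pattern of consecutive edges sharing exactly one vertex, so each rotation produces new endpoints more sparingly than in P\'osa's original graph argument. Controlling the count $|S|$ requires a careful combinatorial analysis, cleanly expressed by iterating the extension lemma one edge at a time and tracking the growth of the endpoint set across successive rotations. This bookkeeping is where I expect the bulk of the technical work to reside.
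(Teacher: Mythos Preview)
Your greedy outer loop is exactly what the paper does: repeatedly peel off a long loose path from the uncovered vertices, stopping once fewer than $\eps^2 n$ remain. The paper phrases the path length as $\eps^3 n$ (in edges) rather than $c|U|$, but this is only cosmetic; the bound $|\Pa| \leq 1/\eps^3$ follows either way.

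Where you diverge from the paper is in the subclaim that a dense $k$-graph contains a long loose path, and here you are making life much harder than necessary. You propose P\'osa-style rotations combined with the extension lemma, and you correctly identify the rotation bookkeeping as the bottleneck. The paper bypasses all of this with a two-line argument that uses neither rotations nor the extension lemma. From $e(X,\dots,X) \geq p|X|^k - \mu n^k \geq \tfrac{p}{2}|X|^k$ one gets that the average vertex degree in $H[X]$ is at least $\tfrac{p}{2(k-1)!}|X|^{k-1}$; iteratively deleting vertices of smaller degree yields a subgraph $H''$ with $\delta_1(H'') \geq \tfrac{p}{2k\cdot(k-1)!}|X|^{k-1}$. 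Now take a \emph{longest} loose path $P$ in $H''$ and let $y$ be an endvertex. Every edge of $H''$ through $y$ must meet $V(P)$ in a second vertex (else $P$ extends), so $d_{H''}(y) \leq |V(P)|\cdot |X|^{k-2}$. Comparing with the minimum-degree bound gives $|V(P)| \geq \tfrac{p}{2k\cdot(k-1)!}|X|$ directly, which is exactly your constant-fraction subclaim.

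So your proposal is not wrong, but the anticipated ``bulk of the technical work'' is illusory: the simple pass-to-a-high-minimum-degree-subgraph trick replaces the entire rotation machinery, and the extension lemma is not used in this lemma at all.
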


We can now prove Theorem~\ref{main} by combining the three key lemmas as outlined earlier.

%The proof of Theorem~\ref{main} then proceeds by setting aside the absorbing path and the connecting
%set $B$, covering the remaining vertices by paths, connecting the absorbing path and the covering
%paths into a cycle, and finally absorbing the uncovered vertices and the unused vertices of $B$.

\begin{proof}[Proof of Theorem~\ref{main}] % {{{
Let $\absorbingconst$ and $\connectingconst$ be the constants from Lemmas~\ref{absorbingpath}
and~\ref{connecting} respectively.  Define
\begin{align*}
  \eps := \frac{1}{2} \min \left\{ \absorbingconst, \frac{\alpha^4 p^2}{400k^2}, \frac{p}{2k \cdot
  k!}
  \right\}.
\end{align*}
(Note this is a valid definition because $\absorbingconst$ only depends on $\alpha$, $p$, and $k$.)
Also, assume $\mu > 0$ is small enough and $n_0 \geq \frac{2}{c_6\eps^5}$ is large enough to apply
Lemmas~\ref{absorbingpath},~\ref{connecting}, and~\ref{pathcover} (the latter with $(1-2\eps)n_0$ here in place of $n_0$ there and $\frac{\mu}{(1-2\eps)^k}$ here in place of $\mu$ there).

First, apply Lemma~\ref{absorbingpath} to obtain a path $P$ with endvertex pair $(u,v)$, at most
$\eps n$ vertices, and such that for any set $X \subseteq V(H) \sm V(P)$ where $k-1$ divides
$|X|$ and $|X| \leq \absorbingconst \eps n$, there is a path $P^*$ in $H$ such that $V(P^*) =
V(P) \cup X$ and $P^*$ has endvertex pair $(u,v)$.  Next, let $A = V(P)$ and apply
Lemma~\ref{connecting} to obtain a set $B \subseteq V(H) \sm A$ with $|B| \leq \eps^2 n$ such that any collection of
at most $\connectingconst \eps^2 n$ pairs of vertices can be connected using $B$.

Let $H' = H \sm (A \cup B)$ and let $n' = |V(H')|$.  Note that $n' \geq (1-2\eps) n$
and that $H'$ is an $(n',p,\frac{\mu}{(1-2\eps)^k})$ $k$-graph.  Indeed, for any $X_1, \dots,
X_k \subseteq V(H')$, since $H$ is $(n,p,\mu)$-dense we have that
\begin{align*}
  e(X_1,\dots,X_k) \geq p |X_1| \cdots |X_k| - \mu n^k \geq p |X_1| \cdots |X_k| -
  \frac{\mu}{(1-2\eps)^k} (n')^k.
\end{align*}
Apply Lemma~\ref{pathcover} to $H'$ to produce a collection of vertex-disjoint paths $\mathcal{P} =
\{P_1, \dots, P_t\}$ such that $t \leq \frac{1}{\eps^3}$ and at most $\eps^2 n' \leq
\eps^2 n$ vertices of $H'$ are not covered by paths in $\mathcal{P}$.  Let $(u_i, v_i)$ be an endvertex pair of $P_i$ for
each $i$.  Also, recall that the absorbing path $P$ has endvertex pair $(u,v)$.  By choice of $B$ we can choose vertex-disjoint paths $Q_0, \dots, Q_{t}$ such that path $Q_0$ has endvertex pair $(v, u_1)$, path $Q_{t}$ has endvertex pair $(v_t, u)$, and for each $1 \leq i \leq t-1$ the path $Q_i$ has endvertex pair $(v_i, u_{i+1})$, and such that these endvertices are the only vertices of the paths $Q_i$ which do not lie in $B$. Indeed, we can do this since the number of pairs is $t + 1 \leq
\frac{1}{\eps^3} + 1 \leq \frac{2}{\eps^3}$, which is at most $\connectingconst \eps^2 n$ by choice of $n_0$. Observe that $C = P, Q_0, P_1, Q_1, \dots, P_t, Q_t$ is then a cycle in $H$.

Let $X$ be the set of all vertices of $H$ not covered by $C$. So $X$ consists of the at most $\eps^2 n$ vertices of $H'$ not covered by $\mathcal{P}$, as well as the at most $\eps^2 n$ vertices of $B$ not covered by the paths $Q_i$. Since $\eps \leq
\absorbingconst/2$, we have $|X| \leq 2 \eps^2 n \leq \absorbingconst \eps n$. Furthermore, since $n$ and $|V(C)|$ are both divisible by $k-1$, $|X|$ is divisible by $k-1$ also. So by choice of $P$ there is a path $P^*$ in $H$ such that $V(P^*) = V(P) \cup X$ and $P^*$ has endvertex pair $(u, v)$ (i.e. the same endvertex pair as $P$). Replacing $P$ by $P^*$ in $C$ gives a loose Hamilton cycle in $H$.
\end{proof}

\section{Proofs of the key lemmas} \label{sec:keylemmas}

This section contains proofs of the three key lemmas: the connecting lemma, the absorbing path
lemma, and the path cover lemma. For both the connecting lemma and absorbing path lemma, the key
element is an extension lemma for quasirandom hypergraphs proved by Lenz and Mubayi~\cite{pp-lenz14}. We actually only need the following special case of the lemma, which is obtained from the full version~\cite[Lemma 11]{pp-lenz14} by taking $Z_{m+1} = \dots = Z_f = V(H)$; it is easily checked that equations~\textit{(1)} and~\textit{(2)} of the full version of the lemma are then both satisfied.

\begin{lemma}[Extension Lemma] \label{extension}
  Fix $k \geq 2$, $0 < p,\alpha, \gamma < 1$ and integers $0 \leq m \leq f$. Suppose that $F$ is an $f$-vertex $k$-graph with vertex set $V(F) =
  \{s_1,\dots,s_m, \linebreak[1] t_{m+1},\dots,t_f\}$ such that 
\begin{enumerate}[label=(\alph*), noitemsep]
\item any edge $e \in F$ satisfies $|e \cap \{s_1, \dots, s_m\}| \leq 1$,
\item any two distinct edges $e, e' \in F$ have $|e \cap e'| \leq 1$, and
\item any two distinct edges $e, e' \in F$ with $e \cap \{s_1, \dots, s_m\} \neq \emptyset$ and $e' \cap \{s_1, \dots, s_m\} \neq \emptyset$ satisfy $e \cap e' \cap \{t_{m+1}, \dots, t_f\} = \emptyset$.
\end{enumerate}
  Then there exist $n_0$ and $\mu > 0$ such that the following holds.  Let
  $H$ be an $(n,p,\mu,\alpha)$ $k$-graph with $n \geq n_0$ and let $y_1,\dots,y_m \in V(H)$.
  Then the number of edge-preserving injections from $V(F)$ to $V(H)$ which map $s_i$ to $y_i$ for
  each $1 \leq i \leq m$ is at least
  \begin{align*}
    \alpha^{d_F(s_1)} \cdots \alpha^{d_F(s_m)} p^{|F|-\sum_{i=1}^{m} d_F(s_i)} n^{f-m} -
    \gamma n^{f-m}.
  \end{align*}
\end{lemma}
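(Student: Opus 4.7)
The plan is to prove the Extension Lemma by induction on $|F|$, peeling off one edge at a time and using either the $(p,\mu)$-density condition or the minimum vertex degree condition to estimate the number of valid extensions. For the base case $|F| = 0$, the number of injections from $\{t_{m+1},\ldots,t_f\}$ into $V(H) \setminus \{y_1,\ldots,y_m\}$ equals $(n-m)(n-m-1)\cdots(n-f+1)$, which for sufficiently large $n$ exceeds $(1-\gamma)n^{f-m}$; this matches the target formula since both exponents of $\alpha$ and $p$ are zero.

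For the inductive step, pick an edge $e \in F$, set $F' = F \setminus \{e\}$ on the same vertex set, and note $F'$ still satisfies conditions (a)--(c). Writing $N(\cdot)$ for the count of edge-preserving injections fixing $s_i \mapsto y_i$, we have
\[
N(F) = \sum_{\phi \in \mathrm{Inj}(F')} \mathbb{1}[\phi(e) \in H].
\]
To estimate this, I first split the sum according to $\phi_{\mathrm{ext}} := \phi|_{V(F) \setminus e}$, leaving the embedding of the vertices of $e$ not shared with $V(F') \setminus e$ as a free sum. If $e$ contains no $s_i$, I apply the $(p,\mu)$-density condition to the sets $X_j$ of available images for each vertex slot of $e$; linearity (condition (b)) guarantees that each $X_j$ equals $V(H)$ minus at most $f$ vertices already used by $\phi_{\mathrm{ext}}$, so the density bound contributes a factor of $p - o(1)$. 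If $e$ contains some $s_i$ (unique by condition (a)), the minimum vertex degree bound $d(y_i) \geq \alpha \binom{n}{k-1}$ supplies the analogous factor $\alpha - o(1)$; here condition (c) ensures the remaining $k-1$ vertices of $e$ are not pinned by other $s$-edges. Either way we obtain $N(F) \geq \beta \cdot N(F') - \gamma'' n^{f-m}$ with $\beta \in \{p, \alpha\}$, and iterating yields the target bound.

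The main obstacle will be controlling the additive error accumulated across the $|F|$ inductive steps, since each application of the density condition contributes an error of order $\mu n^{f-m}$ after scaling; this forces $\mu$ to be chosen small in terms of $\gamma/|F|$, $p$, and $\alpha$. A more delicate point is that when $\phi_{\mathrm{ext}}$ already pins down $k-1$ vertices of $e$, the edge $e$ closes at a single remaining free vertex, which would require codegree information that our hypothesis does not supply; the remedy is to perform the density estimate on the sum over $\phi_{\mathrm{ext}}$ rather than pointwise, applying $e(X_1,\ldots,X_k) \geq p|X_1|\cdots|X_k| - \mu n^k$ with each $X_i$ of size $\Theta(n)$ so that the $\mu n^k$ slack remains negligible. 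Lower-order corrections coming from the injectivity constraint (collisions among images of distinct $t_j$) are of order $O(n^{f-m-1})$ and are easily absorbed into the error budget.
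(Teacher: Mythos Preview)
The paper does not give a self-contained proof of this lemma; it simply observes that the statement is the special case $Z_{m+1} = \dots = Z_f = V(H)$ of \cite[Lemma~11]{pp-lenz14}. Your proposal instead sketches a direct inductive proof. The overall architecture --- peel off one edge at a time, apply the density hypothesis to non-$s$-edges and the minimum-degree hypothesis to $s$-edges --- is the right one, and your treatment of the non-$s$-edge case is essentially correct: writing $\phi = (\psi, \phi|_e)$ with $\psi = \phi|_{V(F)\setminus e}$, linearity (condition~(b)) guarantees that the constraints on each $\phi(v_j)$ coming from edges of $F'$ depend only on $\psi$ and are independent across $j$, so the inner sum is exactly $e(X_1(\psi),\dots,X_k(\psi))$ and the density bound gives $N(F) \ge p\,N(F') - \mu n^{f-m}$. (One quibble: the sets $X_j(\psi)$ need not have size $\Theta(n)$ as you claim --- they are common-neighbourhood sets and may be tiny --- but this does not matter, since after summing over $\psi$ the product $\prod_j |X_j(\psi)|$ reconstitutes $N(F')$ regardless.)

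There is, however, a genuine gap in your treatment of an $s$-edge $e = \{s_i, v_2, \dots, v_k\}$. You write that ``condition (c) ensures the remaining $k-1$ vertices of $e$ are not pinned by other $s$-edges''. True, but they may well be pinned by \emph{non}-$s$-edges of $F'$, and in that case the minimum-degree bound $d(y_i) \ge \alpha\binom{n}{k-1}$ tells you nothing about how many of the $(k-1)$-tuples $(\phi(v_2),\dots,\phi(v_k))$ \emph{arising from valid embeddings of $F'$} yield an edge through $y_i$; the density condition cannot rescue you either, since one slot is the singleton $\{y_i\}$ and the additive error $\mu n^k$ swamps everything. The fix is to specify the order in which edges are peeled: remove all non-$s$-edges first (using density), and only then remove the $s$-edges. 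At that stage conditions~(b) and~(c) together ensure that $v_2,\dots,v_k$ are isolated in $F'$, so the minimum-degree bound applies cleanly and yields the factor $\alpha$. With this ordering made explicit your argument goes through.
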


We will also require the following well-known concentration bound on sums of indicator random
variables; the form we use is Corollary~2.3 of~\cite{janson2000random}.

\begin{lemma}[Chernoff bound] \label{lem:chernoff} 
  Let $0 < p < 1$, let $X_1,\dots,X_m$ be mutually independent indicator random variables with
  $\mathbb{P}[X_i = 1] = p$ for any $1 \leq i \leq m$, and let $X = \sum_{i = 1}^m X_i$.
  Then for any $0 < a \leq 3/2$ we have
  \begin{align*}
    \mathbb{P}[ \left| X - \mathbb{E}[X] \right| > a\mathbb{E}[X]] \leq 2 e^{-a^2\mathbb{E}[X]/3}.
  \end{align*}
\end{lemma}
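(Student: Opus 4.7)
The plan is to derive this via the standard exponential-moment (Chernoff) method, handling the upper and lower tails separately and then combining them with a union bound to obtain the factor of $2$. Since the authors cite this as Corollary~2.3 of Janson, {\L}uczak, and Ruci\'nski, a textbook proof is enough; I sketch it in the form most convenient for the polynomial-exponent bound $e^{-a^2\mathbb{E}[X]/3}$.

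For the upper tail, I would fix $t>0$ and apply Markov's inequality to $e^{tX}$. By independence, $\mathbb{E}[e^{tX}]=\prod_{i=1}^m \mathbb{E}[e^{tX_i}]=(1-p+pe^t)^m$, and the elementary inequality $1-p+pe^t\le \exp(p(e^t-1))$ yields $\mathbb{E}[e^{tX}]\le \exp(\mathbb{E}[X](e^t-1))$. Hence
\begin{equation*}
\mathbb{P}[X>(1+a)\mathbb{E}[X]]\le \exp\bigl(\mathbb{E}[X](e^t-1)-t(1+a)\mathbb{E}[X]\bigr).
\end{equation*}
Optimising by setting $t=\ln(1+a)$ gives $\mathbb{P}[X>(1+a)\mathbb{E}[X]]\le \exp(-\mathbb{E}[X]\,\varphi(a))$, where $\varphi(a)=(1+a)\ln(1+a)-a$. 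A short calculus check shows $\varphi(a)\ge a^2/3$ for all $0<a\le 3/2$ (the factor $1/3$ being what forces the stated range of $a$); this yields the desired one-sided bound $\mathbb{P}[X>(1+a)\mathbb{E}[X]]\le e^{-a^2\mathbb{E}[X]/3}$.

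For the lower tail, I would run the same argument with $t<0$: by symmetry of the Markov step and the same Taylor estimate $1-p+pe^t\le \exp(p(e^t-1))$, optimising at $t=\ln(1-a)$ (valid for $0<a<1$, and the case $a\ge 1$ is trivial since $X\ge 0$) gives $\mathbb{P}[X<(1-a)\mathbb{E}[X]]\le \exp(-\mathbb{E}[X]\,\psi(a))$ with $\psi(a)=(1-a)\ln(1-a)+a\ge a^2/2\ge a^2/3$. A union bound on the two tails produces the factor $2$ in the statement.

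The only nontrivial step is the deterministic inequality $\varphi(a)\ge a^2/3$ on $(0,3/2]$, which I would verify by noting $\varphi(0)=\varphi'(0)=0$ and expanding; this is precisely where the hypothesis $a\le 3/2$ enters (beyond that threshold one loses the clean constant $1/3$). Everything else is routine manipulation of the moment generating function of a Bernoulli, so I do not anticipate any real obstacle. Since this lemma is only invoked as a black box in later sections, citing Janson--{\L}uczak--Ruci\'nski as the authors do is entirely sufficient in the final write-up.
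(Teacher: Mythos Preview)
Your proposal is correct; the paper does not actually prove this lemma but simply cites it as Corollary~2.3 of Janson, {\L}uczak, and Ruci\'nski, so your standard exponential-moment derivation (upper tail via $\varphi(a)=(1+a)\ln(1+a)-a\ge a^2/3$ on $(0,3/2]$, lower tail via $\psi(a)\ge a^2/2$, union bound) goes beyond what the paper provides. Your closing remark that citing the reference suffices is exactly what the paper does.
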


\subsection{The connecting lemma} \label{sub:connecting} % {{{2

Let $u$ and $v$ be distinct vertices of a $k$-graph $H$, and let $C$ be a set of $3k-4$ vertices of~$H$. We say that $C$ is a \emph{connecting set} for the pair $(u,v)$ if
$H[C \cup \{u,v\}]$ contains a path of length three with endvertex pair $(u,v)$.

\begin{lemma} \label{lem:manyconnect}
  For any $k \geq 2$ and $0 < p,\alpha < 1$, there exist $\mu > 0$ and $n_0$ such that the following holds.
  If $H$ is an $(n, p, \mu, \alpha)$ $k$-graph with $n \geq n_0$, then for any $u, v \in V(H)$ with $u \neq v$ there are at least $\frac{1}{2} \alpha^2 p \binom{n}{3k-4}$ connecting sets for $(u,v)$.
\end{lemma}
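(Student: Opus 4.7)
The plan is a direct application of the Extension Lemma (Lemma~\ref{extension}) to a length-three loose path with prescribed endvertices. Let $F$ be a loose path of length $3$ on $3k-2$ vertices, with distinguished vertices $s_1$ and $s_2$ playing the role of the two endvertices (one degree-one vertex in the first edge, one degree-one vertex in the last edge), and label the remaining $3k-4$ vertices $t_3,\dots,t_{3k-2}$. I would apply Lemma~\ref{extension} with $m=2$, $f=3k-2$, and with $y_1=u$, $y_2=v$.

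Before invoking the lemma, I would verify its three hypotheses on $F$. In a loose path, consecutive edges share exactly one vertex and non-consecutive edges are vertex-disjoint. Since the first and third edges of a length-three loose path are non-consecutive, they are disjoint. Thus: (a) each edge of $F$ contains at most one of $s_1,s_2$, because $s_1$ lies only in the first edge and $s_2$ only in the last; (b) any two distinct edges of $F$ share at most one vertex, which is immediate from loose-path structure; and (c) the only two edges meeting $\{s_1,s_2\}$ are the first and the last, and they are vertex-disjoint, so their intersection restricted to $\{t_3,\dots,t_{3k-2}\}$ is empty.

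With $d_F(s_1)=d_F(s_2)=1$ and $|F|=3$, Lemma~\ref{extension} applied with some parameter $\gamma$ (to be chosen small) yields, for suitable $\mu$ and $n_0$, at least
\[
\alpha^2 p^{\,3-2} n^{3k-4} - \gamma n^{3k-4}
\]
edge-preserving injections $\phi\colon V(F)\to V(H)$ with $\phi(s_1)=u$ and $\phi(s_2)=v$. Choosing $\gamma \le \tfrac{1}{4}\alpha^2 p$, the count is at least $\tfrac{3}{4}\alpha^2 p \, n^{3k-4}$.

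Each such injection $\phi$ produces a connecting set $C:=\phi(\{t_3,\dots,t_{3k-2}\})$ of size $3k-4$ for $(u,v)$, because the image of the three edges of $F$ gives a loose path of length three in $H[C\cup\{u,v\}]$ with endvertex pair $(u,v)$. Conversely, once $C$ is fixed, the number of injections with this image on the internal vertices is at most the number of permutations of those vertices, i.e.~at most $(3k-4)!$. Hence the number of distinct connecting sets is at least
\[
\frac{\tfrac{3}{4}\alpha^2 p \, n^{3k-4}}{(3k-4)!} \;\ge\; \tfrac{3}{4}\alpha^2 p \binom{n}{3k-4} \;\ge\; \tfrac{1}{2}\alpha^2 p \binom{n}{3k-4},
\]
using $n^{3k-4} \ge (3k-4)!\binom{n}{3k-4}$. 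This completes the plan.

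There is no serious obstacle here; the only place that requires care is verifying conditions (a)--(c) on $F$ (which hinges on the observation that the first and last edges of a length-three loose path are disjoint) and tracking the exponents of $\alpha$ and $p$ in the Extension Lemma. The $(3k-4)!$ overcounting bound is crude but loses only a constant, which is absorbed into the slack between $\tfrac{3}{4}$ and $\tfrac{1}{2}$.
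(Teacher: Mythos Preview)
Your proposal is correct and follows essentially the same approach as the paper: apply the Extension Lemma with $F$ a length-three loose path, $m=2$, and $(s_1,s_2)$ an endvertex pair, then divide the resulting injection count by $(3k-4)!$ and compare to $\binom{n}{3k-4}$. The only differences are cosmetic---you take $\gamma \le \tfrac{1}{4}\alpha^2 p$ where the paper takes $\gamma = \tfrac{1}{2}\alpha^2 p$, and you verify conditions (a)--(c) explicitly while the paper leaves this implicit.
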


\begin{proof}
Let $\gamma = \frac{1}{2} \alpha^2 p$, let $F$ be a path of length three, 
let $m=2$, let $f = 3k-2$ and let $(s_1, s_2)$ be an endvertex pair 
of $F$. Choose $\mu > 0$ and $n_0$ such that we can apply 
Lemma~\ref{extension} with these inputs. Then for any 
$u, v \in V(H)$ with $u \neq v$, Lemma~\ref{extension} states that 
there exist at least $\frac{1}{2} \alpha^2 p n^{3k-4}$
edge-preserving injections from $V(F)$ to $V(H)$ such that 
$s_1$ maps to $u$ and $s_2$ maps to $v$. The image of $V(F) \sm \{s_1, s_2\}$ 
under any such injection is a connecting set for $(u,v)$. Since each connecting set is given by at most $(3k-4)!$ injections, we conclude that the number of connecting sets for $(u,v)$ is at least $\frac{1}{2(3k-4)!}\alpha^2p n^{3k-4} \geq \frac{1}{2} \alpha^2 p \binom{n}{3k-4}$.
\end{proof}

\begin{proof}[Proof of Lemma~\ref{connecting}]
Let $0 < p, \alpha < 1$, let $\nu = \frac{\alpha^2 p}{4}$ and let 
$\connectingconst = \frac{\nu}{50 k^2}$. Also fix $0 < \eps < \frac{\nu}{5k} = \frac{\alpha^2 p}{20k}$, and 
choose $\mu >0$ small enough and $n_0 \geq 10k$ large enough to apply 
Lemma~\ref{lem:manyconnect} and for the union bound later in the proof.  
Let $H$ be an $(n, p, \mu, \alpha)$ $k$-graph with $n \geq n_0$ and fix 
$A \subseteq V(H)$ with $|A| \leq \eps n$. Define $a = |A|$ and 
$q = \frac{\eps^2 n}{20k} \binom{n}{3k-4}^{-1}$.

Now form a collection $\B \subseteq \binom{V(H) \sm A}{3k-4}$ by including each
element of $\binom{V(H) \sm A}{3k-4}$ with probability $q$ and independently of all other choices.  The expected
size of $\B$ is $q \binom{n - a}{3k-4} \leq \frac{\eps^2  n}{20k}$ so, by Markov's
inequality, with probability at least $3/4$ we have $|\B| \leq \frac{ \eps^2 n}{5k}$.
Similarly, the expected number of ordered pairs of elements from $\B$ which intersect is at most 
$$q^2 \binom{n-a}{3k-4} (3k-4) \binom{n-a}{3k-5} = q^2 (3k-4)\frac{3k-4}{n-a-3k+5} \binom{n-a}{3k-4}^2 \leq \frac{\eps^4 n}{20}$$ 
so, by 
Markov's inequality, with probability at least $3/4$ at most 
$\frac{\eps^4 n}{5} < \frac{\nu^2\eps^2n}{125k^2}$ members of $\B$ intersect another 
member of $\B$.

For each pair of distinct vertices $u, v \in V(H)$ choose a collection $\Gamma_{u,v}$ of $\nu \binom{n}{3k-4}$ connecting sets $C$ for $(u, v)$ with $C \cap A = \emptyset$. This is possible since by Lemma~\ref{lem:manyconnect} there are at least $2\nu \binom{n}{3k-4}$ connecting sets for
$(u,v)$, and at most $|A| \binom{n}{3k-5} \leq \eps n \frac{3k-4}{n-3k+5} \binom{n}{3k-4} < \nu \binom{n}{3k-4}$ of these sets contain a vertex of $A$. 
Then for any $u$ and $v$ the expected size of $\Gamma_{u,v} \cap \B$ is $q \nu \binom{n}{3k-4} = \frac{\nu \eps^2 n}{20k}$, so by Lemma~\ref{lem:chernoff} we have
\begin{align*}
  \mathbb{P}\left[ \Big| |\Gamma_{u,v} \cap \B| - \frac{\nu \eps^2 n}{20k} \Big| > \frac{\nu
  \eps^2 n}{40k} \right] \leq 2e^{-\nu \eps^2 n/480k}.
\end{align*}
Taking a union bound over all of the $\binom{n}{2}$ pairs $(u,v)$, we conclude that with probability at least $3/4$ the collection $\B$ satisfies $|\Gamma_{u,v} \cap \B| \geq \frac{\nu \eps^2 n}{40k}$ for every $u \neq v$. 

We may therefore fix a collection $\B$ which satisfies each of the 
three described events of probability at least  $3/4$. 
We then form $\B' \subseteq \B$ by deleting from $\B$ the at most 
$\frac{\nu^2 \eps^2 n}{125k^2}$ members of $\B$ which intersect another 
member of $\B$. The family $\B'$ then satisfies $|\B'| \leq |\B| \leq 
\frac{\eps^2 n}{5k}$ and has the property that for any distinct 
vertices $u, v \in V(H)$ at least $\frac{\nu \eps^2 n}{40k} - \frac{\nu^2\eps^2n}{125 k^2} \geq \frac{\nu\eps^2n}{50k} \geq 
\connectingconst \eps^2 n$ members of $\B'$ are connecting sets for 
$(u, v)$. Let $B = \bigcup \B'$, so $|B| = (3k-4)|\B'| \leq \eps^2 n$. Finally, for any $t \leq \connectingconst \eps^2 n$ and any $2t$ distinct vertices $u_1, \dots, u_t, v_1, \dots, v_t$ of $H$, we can 
greedily choose for each pair $(u_i, v_i)$ a unique member $C_i$ of $\B'$ which is a connecting set for $(u_i, v_i)$; the chosen connecting sets give the required paths.
\end{proof}

\subsection{The absorbing path lemma} \label{sub:absorbing} % {{{2

To prove the absorbing path lemma, we will apply Lemma~\ref{extension} with the following
$k$-graph $F$.

\newcommand{\sizeF}{3k^2 - 7k + 5}
\newcommand{\sizeFprime}{3k^2 - 8k + 6} % F without s_1,..,s_{k-1}
\newcommand{\sizeFprimeminusone}{3k^2 - 8k + 5} % F without s_1,..,s_{k-1} and minus one more
\newcommand{\edgesF}{5k-7}
\newcommand{\edgesFwiths}{2k-2}  % edges touching s_i
\newcommand{\edgesFwithouts}{3k-5} % edges not touching s_i

\begin{lemma} \label{lem:constrofF}
For any $k \geq 2$ there exists a $k$-graph $F$ with $\sizeF$ vertices and $\edgesF$ edges, a set $S \subseteq
  V(F)$ of $k-1$ vertices, and distinct vertices $u, v \in V(F)$, such that
  \begin{enumerate}[label=(\alph*), noitemsep]
    \item $F$ contains a path with endvertex pair $(u,v)$ as a spanning subgraph,
    \item $F \sm S$ contains a path with endvertex pair $(u, v)$ as a spanning subgraph,
    \item no edge $E \in F$ has $|E \cap S| \geq 2$,
    \item for any distinct edges $E_1, E_2 \in F$ we have $|E_1 \cap E_2| \leq 1$, and
    \item for any distinct edges $E_1, E_2 \in F$ with $E_1 \cap S \neq \emptyset$ and $E_2 \cap S \neq \emptyset$ we have $(E_1 \cap E_2) \sm S = \emptyset$.
  \end{enumerate}
\end{lemma}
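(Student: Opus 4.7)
The plan is to build $F$ as the edge-union of the two desired loose paths $P_1$ (of length $3k-4$) and $P_2$ (of length $3k-5$), arranging them to share a common suffix $Q$ of $k-2$ edges ending at $v$ and to differ only on an initial ``absorbing segment''. The suffix $Q$ will be a loose path of length $k-2$ from some vertex $v^*$ to $v$ on $(k-1)(k-2)$ fresh vertices. Outside of $Q$, the subpath $P_1^* \subseteq P_1$ (a loose path from $u$ to $v^*$ of length $2k-2$) contains all of $S$, while the subpath $P_2^* \subseteq P_2$ (of length $2k-3$) is built on $V(P_1^*) \setminus S$, arranged so that $P_1^*$ and $P_2^*$ share no vertices other than $u$ and $v^*$. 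Counting then gives $|V(F)| = (2k^2-4k+3) + (k-1)(k-2) = 3k^2-7k+5$ and $|E(F)| = (2k-2)+(2k-3)+(k-2) = 5k-7$, as required.

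For $P_1^*$ I take the loose path $E_1, E_2, \ldots, E_{2k-2}$ from $u$ to $v^*$ and place the vertices of $S = \{s_1, \ldots, s_{k-1}\}$ by setting $s_1 := E_1 \cap E_2$ and, for each $j = 2, \ldots, k-1$, setting $s_j$ to be a non-shared internal vertex of $E_{2j}$. Property (c) then holds by construction, and since $P_2^*$ and $Q$ avoid $S$ entirely, (c) holds for all of $F$. For (e), the $S$-meeting edges of $F$ are exactly $E_1, E_2, E_4, E_6, \ldots, E_{2k-2}$: the only pair of these that is adjacent in the loose path $P_1^*$ is $(E_1, E_2)$, which shares $s_1 \in S$, while every other pair is non-adjacent in $P_1^*$ and hence disjoint. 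Linearity (d) within $P_1^*$, within $P_2^*$, and within $Q$ is automatic.

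The main task and principal obstacle is the construction of $P_2^* = F_1, \ldots, F_{2k-3}$ as a loose path from $u$ to $v^*$ on $V(P_1^*) \setminus S$ such that $|F_i \cap E_j| \le 1$ for every $i,j$; this is what is needed to extend linearity (d) to all of $F = P_1^* \cup P_2^* \cup Q$. A simple counting observation is that in each $F_i$ at most $k-2$ of the $k$ vertices can be interior shared vertices of $P_1^*$ (since those vertices lie in two different $E_j$'s), so each $F_i$ must use at least two vertices that lie in only one $E_j$. The required $P_2^*$ can be built by weaving through the non-$S$ vertices of $P_1^*$ in a careful zig-zag pattern; for instance, for $k = 3$, with $E_1 = \{u, a, s_1\}$, $E_2 = \{s_1, b, c\}$, $E_3 = \{c, d, e\}$, $E_4 = \{e, s_2, v^*\}$, one checks directly that $F_1 = \{u, b, d\}$, $F_2 = \{b, a, e\}$, $F_3 = \{a, c, v^*\}$ works. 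Once $P_1^*$, $P_2^*$, $Q$ are in hand, $P_1$ and $P_2$ are the loose paths obtained by concatenating $P_1^*$ respectively $P_2^*$ with $Q$; they have endvertex pair $(u,v)$ and span $V(F)$ and $V(F) \setminus S$ respectively, so (a)--(e) follow by direct verification.
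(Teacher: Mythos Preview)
Your overall architecture is sound and close in spirit to the paper's: build $F$ as the union of two loose paths of lengths $3k-4$ and $3k-5$ that share $k-2$ edges, with $S$ living only in the longer one. The parameter counts are correct, your placement of $S$ does give (c) and (e), and the $k=3$ example you worked out for $P_2^*$ is valid.

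But there is a genuine gap. The construction of $P_2^*$ for general $k$ --- a loose $(u,v^*)$-path of length $2k-3$ on $V(P_1^*)\setminus S$ whose every edge meets every edge of $P_1^*$ in at most one vertex --- is the whole content of the lemma, and you have only asserted that it ``can be built by weaving \ldots\ in a careful zig-zag pattern''. That is exactly the hard part: each $F_i$ must draw its $k$ vertices from the $2k-2$ cells $E_1,\ldots,E_{2k-2}$, hitting each at most once, while simultaneously forming a loose path with the prescribed endpoints and using every non-$S$ vertex exactly once. The paper resolves this by a different organisation: the $k-2$ shared edges are not a suffix but are \emph{interspersed} (the edges $\{b_i,a_{i+1}\}\cup W_i$), and between them the two paths diverge in short ``diamonds''. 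The crucial device is that the $k-2$ extra vertices inserted into the non-shared edges come from two families $X_0,\ldots,X_{2k-4}$ and $Y_0,\ldots,Y_{2k-4}$ which are \emph{two different partitions of the same set} $C$, chosen diagonally so that $|X_i\cap Y_j|\le 1$ for all $i,j$. That explicit double-partition is what makes (d) verifiable edge by edge; your proposal supplies no analogue of it, and without it the proof is incomplete. (A minor point: the sentence ``$P_1^*$ and $P_2^*$ share no vertices other than $u$ and $v^*$'' contradicts your own setup, since $V(P_2^*)=V(P_1^*)\setminus S$; presumably you meant they share no edges.)
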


\begin{proof} % {{{
To define the $k$-graph $F$, we first define a $2$-graph $F'$ and then form $F$ by adding
$k-2$ vertices to each edge of $F'$.  Let $V(F') = \{a_i, s_i, t_i : 1 \leq i \leq k-1\} \cup
\{b_i : 1 \leq i \leq k-2\}$ and $E(F') = \{ a_is_i, s_ib_i, a_it_i, t_ib_i, b_ia_{i+1} : 1 \leq i
\leq k-2\} \cup \{ a_{k-1}s_{k-1}, a_{k-1}t_{k-1}, s_{k-1}t_{k-1} \}$.  Also, let $u = a_1$, $v =
t_{k-1}$, and $S = \{s_1,\dots,s_{k-1}\}$.  Note that $F'\sm S$ has a spanning $(u,v)$-path and $F'$
has a $(u,v)$-path which covers all vertices except $t_1,\dots,t_{k-2}$.

\begin{figure}[ht] % {{{
  \center
  \begin{tikzpicture}
    \node (a1) at (0,0) [vertex, label=left:{$u=a_1$}] {};
    \node (s1) at (1,2) [vertex, label=above:$s_1$] {};
    \node (t1) at (1,-2) [vertex, label=below:$t_1$] {};
    \node (b1) at (2,0) [vertex, label=below:$b_1$] {};
    \node (a2) at (3.5,0) [vertex, label=below:$a_2$] {};
    \node (s2) at (4.5,2) [vertex, label=above:$s_2$] {};
    \node (t2) at (4.5,-2) [vertex, label=below:$t_2$] {};
    \node (b2) at (5.5,0) [vertex, label=below:$b_2$] {};
    \node (a3) at (6.5,0) [vertex, label=below:$a_3$] {};
    %\node (s3) at (7,1) [vertex] {};
    %\node (t3) at (7,-1) [vertex] {};

    \node at (7.25,0) {$\dots$};

    %\node (sk3) at (8,1) [vertex] {};
    %\node (tk3) at (8,-1) [vertex] {};
    \node (bk3) at (8,0) [vertex, label=-80:$b_{k-3}$] {};
    \node (ak2) at (9,0) [vertex, label=right:$a_{k-2}$] {};
    \node (sk2) at (10,2) [vertex, label=above:$s_{k-2}$] {};
    \node (tk2) at (10,-2) [vertex, label=below:$t_{k-2}$] {};
    \node (bk2) at (11,0) [vertex, label=-80:$b_{k-2}$] {};
    \node (ak1) at (13,0) [vertex, label=95:$a_{k-1}$] {};
    \node (sk1) at (14,2) [vertex, label=above:$s_{k-1}$] {};
    \node (tk1) at (14,-2) [vertex, label=below:{$v = t_{k-1}$}] {};

    \draw (a1) -- (s1) -- (b1) -- (a2);
    \draw (a2) -- (s2) -- (b2) -- (a3) -- +(0.25,0.5);
    \draw (a1) -- (t1) -- (b1);
    \draw (a2) -- (t2) -- (b2);
    \draw (a3) -- +(0.25,-0.5);

    \draw (bk3) -- +(-0.25,0.5);
    \draw (bk3) -- +(-0.25,-0.5);
    \draw (bk3) -- (ak2) -- (sk2) -- (bk2) -- (ak1);
    \draw (ak1) -- (sk1) -- (tk1);
    \draw (ak2) -- (tk2) -- (bk2);
    \draw (ak1) -- (tk1);
  \end{tikzpicture}
  \caption{The graph $F'$.}
\end{figure}
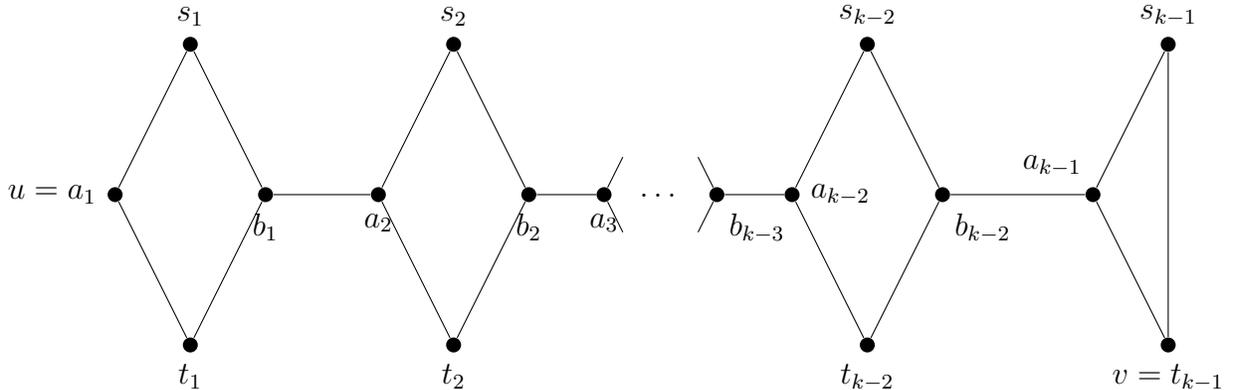 % }}}

Let $V(F) = V(F') \cup \{c_{i,j} : 0 \leq i \leq 2k-4, 1 \leq j \leq k-2\} \cup \{d_{i,j} : 1 \leq
i,j \leq k-2\}$.  We now insert exactly $k-2$ vertices into each edge of $F'$ to form the hyperedges
of $F$.  For example, into the edge $a_1s_1 \in E(F')$, we insert the vertex set $X_0 = \{c_{0,1},
c_{0,2}, \dots, c_{0,k-2}\}$.  In general, define
\begin{itemize}
  \setlength{\itemsep}{1pt}
  \setlength{\parskip}{0pt}
  \setlength{\parsep}{0pt}
  \item $C := \{c_{i,j} : 0 \leq i \leq 2k-4, 1 \leq j \leq k-2\}$,
  \item $D := \{d_{i,j} : 1 \leq i,j \leq k-2\}$,
  \item $W_i := \{d_{i,1}, \dots, d_{i,k-2}\}$ for all $1 \leq i \leq k-2$,
  \item $X_i := \{c_{i,1}, \dots, c_{i,k-2} \}$ for all $0 \leq i \leq 2k-4$,
  \item $Y_i := \{c_{i+1,1}, c_{i+2,2}, \dots, c_{i+j,j}, \dots, c_{i+k-2,k-2}\}$ for all $0 \leq i
    \leq 2k-4$, where the first index of $c$ is taken modulo $2k-3$, and
  \item $Z := \{t_1,\dots,t_{k-2}\}$.
\end{itemize}
Let the edges of $F$ be the following:
\begin{itemize}
  \setlength{\itemsep}{1pt}
  \setlength{\parskip}{0pt}
  \setlength{\parsep}{0pt}
  \item $\{a_i, s_i\} \cup X_{2i-2}$ for all $1 \leq i \leq k-1$,
  \item $\{a_i, t_i\} \cup Y_{2i-2}$ for all $1 \leq i \leq k-1$,
  \item $\{s_i, b_i\} \cup X_{2i-1}$ for all $1 \leq i \leq k-2$,
  \item $\{t_i, b_i\} \cup Y_{2i-1}$ for all $1 \leq i \leq k-2$,
  \item $\{b_i, a_{i+1}\} \cup W_i$ for all $1 \leq i \leq k-2$,
  \item $\{s_{k-1}, t_{k-1}\} \cup Z$.
\end{itemize}

\newcommand{\bubbleedge}[4]{% Options: label, start point, end point, rotation
    \path[name path=line] (#2) -- (#3) node[midway] {#1};
    \draw[rotate=#4, name path=ellip] ($ (#2)!0.5!(#3) $) ellipse (25pt and 15pt);
    \draw[name intersections={of=line and ellip}] (#2) -- (intersection-2) (intersection-1) -- (#3);
}

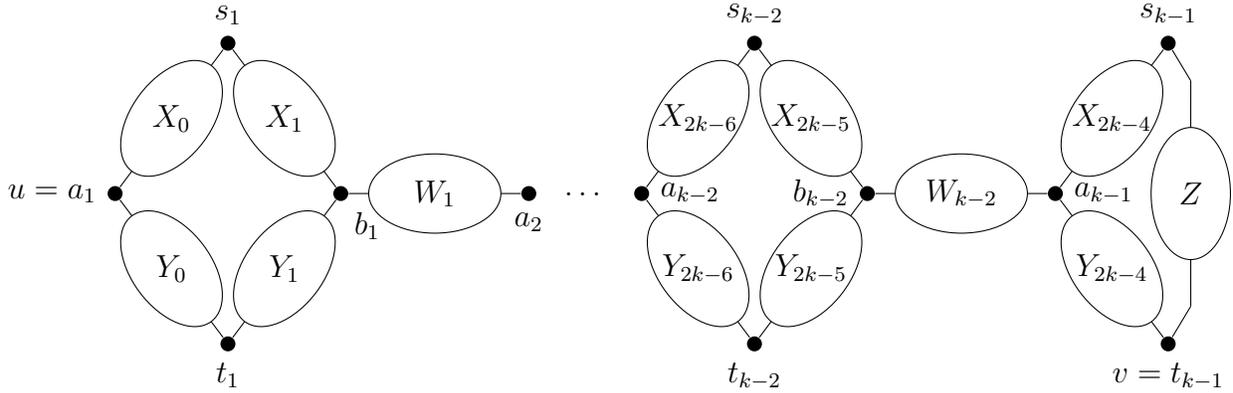
\begin{figure}[ht] % {{{
  \center
  \begin{tikzpicture}
    \node (a1) at (0,0) [vertex, label=left:{$u=a_1$}] {};
    \node (s1) at (1.5,2) [vertex, label=above:$s_1$] {};
    \node (t1) at (1.5,-2) [vertex, label=below:$t_1$] {};
    \node (b1) at (3,0) [vertex, label=-70:$b_1$] {};
    \node (a2) at (5.5,0) [vertex, label=below:$a_2$] {};

    \node at (6.25,0) {$\dots$};

    %\node (sk3) at (8,1) [vertex] {};
    %\node (tk3) at (8,-1) [vertex] {};
    %\node (bk3) at (5.75,0) [vertex, label=below:$b_{k-3}$] {};
    \node (ak2) at (7,0) [vertex, label=right:$a_{k-2}$] {};
    \node (sk2) at (8.5,2) [vertex, label=above:$s_{k-2}$] {};
    \node (tk2) at (8.5,-2) [vertex, label=below:$t_{k-2}$] {};
    \node (bk2) at (10,0) [vertex, label=left:$b_{k-2}$] {};
    \node (ak1) at (12.5,0) [vertex, label=right:$a_{k-1}$] {};
    \node (sk1) at (14,2) [vertex, label=above:$s_{k-1}$] {};
    \node (tk1) at (14,-2) [vertex, label=below:{$v = t_{k-1}$}] {};

    \bubbleedge{$X_0$}{a1}{s1}{53} % arctan(2/1.5) = 53.13
    \bubbleedge{$Y_0$}{t1}{a1}{-53}
    \bubbleedge{$X_1$}{b1}{s1}{-53}
    \bubbleedge{$Y_1$}{t1}{b1}{53}
    \bubbleedge{$W_1$}{b1}{a2}{0}

    \bubbleedge{$X_{2k-6}$}{ak2}{sk2}{53}
    \bubbleedge{$Y_{2k-6}$}{tk2}{ak2}{-53}
    \bubbleedge{$X_{2k-5}$}{bk2}{sk2}{-53}
    \bubbleedge{$Y_{2k-5}$}{tk2}{bk2}{53}
    \bubbleedge{$W_{k-2}$}{bk2}{ak1}{0}

    \bubbleedge{$X_{2k-4}$}{ak1}{sk1}{53}
    \bubbleedge{$Y_{2k-4}$}{tk1}{ak1}{-53}

    \coordinate (sk1prime) at (14.3, 1.5);
    \coordinate (tk1prime) at (14.3, -1.5);
    \draw (sk1) -- (sk1prime) (tk1prime) -- (tk1);
    \bubbleedge{$Z$}{tk1prime}{sk1prime}{90}
  \end{tikzpicture}
  \caption{The $k$-graph $F$ (note that the sets of added vertices are not disjoint).}
  \label{fig:hypergraphF}
\end{figure} % }}}

We now verify that the properties stated in the lemma hold for $F$.  Recall that $V(F) = V(F') \cup
C \cup D$, the sets $X_0,\dots,X_{2k-4}$ partition $C$, the sets $Y_0, \dots, Y_{2k-4}$ also
partition $C$, and $W_1,\dots, W_{k-2}$ partition $D$. First note that, by construction, no two vertices of $S$ lie in the same hyperedge of $F$, so we have (c).

For (a), recall that $F'$ contains a $(u,v)$-path covering all vertices except the vertices in $Z$. The vertices in $Z$ are inserted into the edge $s_{k-1}t_{k-1}$, the sets $X_0,\dots,X_{2k-4}$ partition $C$,
    and the sets $W_1,\dots,W_{k-2}$ partition $D$ so the corresponding hyperedges in $F$ form a spanning
    path with endvertex pair $(u,v)$.  In Figure~\ref{fig:hypergraphF}, the path consists of
    the upper hyperedges.

For (b), recall that $F' \sm S$ contains a spanning $(u,v)$-path and similarly the corresponding hyperedges in $F$ form
    a spanning path with endvertex pair $(u,v)$. Indeed, $Y_0,\dots,Y_{2k-4}$ partition $C$ and
    $W_1,\dots,W_{k-2}$ partition $D$, so all vertices of $V(F') \sm S$ are used exactly once in this
    path.  In Figure~\ref{fig:hypergraphF}, the path consists of the lower hyperedges. 

For (d), consider two distinct hyperedges $E_1$ and $E_2$ of $F$ and let $E'_1$ and $E'_2$ be the
    corresponding edges in $F'$ (that is, $E_1$ was formed by adding vertices to $E'_1$ and similarly
    for $E'_2$ and $E_2$).  If $E_1$ or $E_2$ was formed by inserting a $W_i$, then $|E_1
    \cap E_2| = |E'_1 \cap E'_2| \leq 1$.  Indeed, each vertex in $W_i$ is inserted into at
    most one hyperedge so will never contribute to the intersection, and $F'$ is a graph so $|E'_1
    \cap E'_2| \leq 1$. Now suppose that $E_1$ or $E_2$ was formed by inserting $Z$, say $E_1 = \{s_{k-1}, t_{k-1} \} \cup Z$. If $Z \cap E_2 = \emptyset$ then similarly we have $|E_1 \cap E_2| = |E_1' \cap E'_2| \leq 1$. On the other hand, if $Z \cap E_2 \neq \emptyset$ then we have $t_i \in E_2$ for some $1 \leq i \leq k-2$, so either $E_2 = \{a_i, t_i\} \cup Y_{2i-2}$ or $E_2 = \{t_i, b_i\} \cup Y_{2i-1}$, and in either case we have $|E_1 \cap E_2| = 1$.

    Now consider when $E_1$ and $E_2$ are both formed by inserting one of the $X$s or $Y$s.  The sets
    $X_0,\dots,X_{2k-4}$ form a partition of $C$, so if $E_1$ and $E_2$ were both formed by inserting
    one of the $X$s, then $|E_1 \cap E_2| = |E'_1 \cap E'_2| \leq 1$.  Similarly, the sets
    $Y_0,\dots,Y_{2k-4}$ form a partition of $C$, so if both $E_1$ and $E_2$ were formed by inserting
    one of the $Y$s, then $|E_1 \cap E_2| = |E'_1 \cap E'_2| \leq 1$.  Thus without loss of generality assume
    $E_1$ was formed by inserting $X_i$ and $E_2$ was formed by inserting $Y_{\ell}$.  By construction
    we know that $|X_{i} \cap Y_{\ell}| \leq 1$ since $X_i \cap Y_{\ell} = \{ c_{i, i-\ell}\}$ if $1
    \leq i - \ell \leq k-2$ and is empty otherwise.  Thus if $E'_1 \cap E'_2 = \emptyset$, then $|E_1
    \cap E_2| \leq 1$.  If $|E'_1 \cap E'_2| = 1$, then by construction we must have $i = \ell$ since
    that is the only situation in which graph edges which insert one of the $X$s and one of the $Y$s
    share a vertex.  Since $X_i \cap Y_i = \emptyset$, we have that $|E_1 \cap E_2| \leq 1$.

Finally, for (e) consider $E_1, E_2 \in E(F)$ with $E_1 \neq E_2$, $E_1 \cap S \neq \emptyset$ and $E_2 \cap S \neq
    \emptyset$. If both $E_1$ and $E_2$ were formed by inserting $X$s, then since
    $X_0,\dots,X_{2k-4}$ is a partition of $C$ we have that $E_1$ and $E_2$ do not intersect outside
    $S$.  Now assume without loss of generality that $E_1 = \{s_{k-1}, t_{k-1}\} \cup Z$.  In this
    case, since $Z \cap C = \emptyset$, we also have that $(E_1 \cap E_2) \sm S = \emptyset$.
\end{proof} % }}}

For the remainder of this subsection, fix such a $k$-graph $F$, a set $S \subseteq V(F)$ and distinct vertices $u, v \in V(F)$. For
any $k$-graph $H$ and any set $Y \subseteq V(H)$ with $|Y| = k-1$, we say that a set $Z \subseteq
V(H)$ with $|Z| = \sizeFprime$ is an \emph{absorbing set for $Y$} if $H[Y \cup Z]$ contains a copy
of $F$ in which $Y$ corresponds to $S$. 

\begin{lemma} \label{lem:absorbingsets}
  For any $k \geq 2$ and $0 < p,\alpha < 1$, there exist $\mu > 0$ and $n_0$ such that the following holds.  If
  $H$ is an $(n, p, \mu, \alpha)$ $k$-graph with $n \geq n_0$, then for any set $Y \subseteq V(H)$
  with $|Y| = k-1$, there are at least $\frac{1}{2} \alpha^{\edgesFwiths} p^{\edgesFwithouts}
  \binom{n}{\sizeFprime}$ absorbing sets for $Y$.
\end{lemma}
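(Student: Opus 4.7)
The plan is to apply the Extension Lemma (Lemma~\ref{extension}) directly to the $k$-graph $F$ constructed in Lemma~\ref{lem:constrofF}, with the role of $\{s_1,\ldots,s_m\}$ played by the set $S \subseteq V(F)$ (relabelled as $s_1,\ldots,s_{k-1}$), and the remaining vertices of $F$ relabelled as $t_k,\ldots,t_f$ where $f = 3k^2 - 7k + 5$. Setting $m = k-1$, I first verify that $F$ satisfies conditions (a), (b), (c) of Lemma~\ref{extension}: these are precisely properties (c), (d), (e) of Lemma~\ref{lem:constrofF}.

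Next, given $Y \subseteq V(H)$ with $|Y| = k-1$, fix an arbitrary ordering $y_1,\ldots,y_{k-1}$ of its elements. I would compute the degrees of the $s_i$'s in $F$: each vertex $s_i$ with $1 \leq i \leq k-2$ appears in exactly the two edges $\{a_i,s_i\} \cup X_{2i-2}$ and $\{s_i,b_i\} \cup X_{2i-1}$, while $s_{k-1}$ appears in exactly $\{a_{k-1},s_{k-1}\} \cup X_{2k-4}$ and $\{s_{k-1},t_{k-1}\} \cup Z$. Hence $d_F(s_i) = 2$ for all $i$, so $\sum_{i=1}^{k-1} d_F(s_i) = 2k-2 = \edgesFwiths$ and $|F| - \sum_i d_F(s_i) = (5k-7) - (2k-2) = 3k - 5 = \edgesFwithouts$. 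Setting $\gamma = \tfrac{1}{4}\alpha^{\edgesFwiths} p^{\edgesFwithouts}$ and choosing $\mu$ and $n_0$ so that Lemma~\ref{extension} applies with these parameters, I get at least
\begin{align*}
\alpha^{\edgesFwiths} p^{\edgesFwithouts} n^{f-m} - \gamma n^{f-m} \geq \tfrac{3}{4}\alpha^{\edgesFwiths} p^{\edgesFwithouts} n^{\sizeFprime}
\end{align*}
edge-preserving injections $\phi \colon V(F) \to V(H)$ with $\phi(s_i) = y_i$ for each $i$.

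Each such injection $\phi$ produces an absorbing set for $Y$, namely $Z = \phi(V(F) \sm S)$, since $H[Y \cup Z]$ then contains a copy of $F$ in which $S$ corresponds to $Y$. Conversely, any single absorbing set $Z$ of size $\sizeFprime$ arises from at most $(\sizeFprime)!$ such injections (one for each bijection $\{t_{m+1},\ldots,t_f\} \to Z$ that yields an edge-preserving map). Dividing through, the number of absorbing sets for $Y$ is at least
\begin{align*}
\frac{\tfrac{3}{4}\alpha^{\edgesFwiths} p^{\edgesFwithouts} n^{\sizeFprime}}{(\sizeFprime)!} \geq \tfrac{1}{2}\alpha^{\edgesFwiths} p^{\edgesFwithouts} \binom{n}{\sizeFprime},
\end{align*}
using $\binom{n}{\sizeFprime} \leq n^{\sizeFprime}/(\sizeFprime)!$, which gives the claimed bound.

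There is no real obstacle here; the argument is an essentially immediate packaging of the Extension Lemma against the combinatorial data of $F$. The only mild bookkeeping is to verify the degree count $\sum d_F(s_i) = 2k-2$ (and correspondingly the exponent of $p$) from the explicit edge list of $F$, and to confirm that the three structural hypotheses of Lemma~\ref{extension} match properties (c)--(e) of Lemma~\ref{lem:constrofF} under the identification $\{s_1,\ldots,s_m\} = S$. All the genuine work has already been done in constructing $F$ with these properties.
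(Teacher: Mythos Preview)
Your proposal is correct and follows essentially the same approach as the paper: apply the Extension Lemma to $F$ with $S$ playing the role of the rooted vertices, then divide the injection count by $(\sizeFprime)!$ and compare with the binomial. The only cosmetic difference is that the paper takes $\gamma = \tfrac{1}{2}\alpha^{\edgesFwiths}p^{\edgesFwithouts}$ rather than your $\tfrac{1}{4}$, and omits the explicit verification of the degree count $\sum_i d_F(s_i) = 2k-2$ and the matching of hypotheses (a)--(c) of Lemma~\ref{extension} with properties (c)--(e) of Lemma~\ref{lem:constrofF}, which you spell out.
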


\begin{proof}
Let $\gamma = \frac{1}{2} \alpha^{\edgesFwiths} p^{\edgesFwithouts}$, 
and choose $\mu > 0$ and $n_0$ for which we can apply 
Lemma~\ref{extension} with these inputs and our chosen $k$-graph $F$. 
Then for any set $Y \subseteq V(H)$ of size $|Y| = k-1$, 
Lemma~\ref{extension} states that there are at least 
$\frac{1}{2} \alpha^{\edgesFwiths} p^{\edgesFwithouts} n^{\sizeFprime}$ 
edge-preserving injections from $V(F)$ to $V(H)$ such that the vertices 
of $S$ are mapped to the vertices of $Y$. The image of $V(F) \sm S$ under such an injection is an absorbing set for $Y$, and each absorbing set is given by at most $(\sizeFprime)!$ injections, so we conclude that the number of absorbing sets for $Y$ is at least $\frac{1}{2(\sizeFprime)!} \alpha^{\edgesFwiths} p^{\edgesFwithouts} n^{\sizeFprime} \geq \frac{1}{2} \alpha^{\edgesFwiths} p^{\edgesFwithouts}
  \binom{n}{\sizeFprime}$.
\end{proof}

\begin{proof}[Proof of Lemma~\ref{absorbingpath}]
Let $0 < p, \alpha < 1$, let $\nu = \frac{1}{2} \alpha^{\edgesFwiths} 
p^{\edgesFwithouts}$, and let $\connectingconst$ be the constant from
Lemma~\ref{connecting} for these values of $p$ and $\alpha$. Define 
$\absorbingconst = \frac{\connectingconst \nu^2}{10k^2}$, and fix 
$\eps$ with $0 < \eps < \frac{\alpha^4p^2}{400k^2}$. We will apply 
Lemma~\ref{connecting} with $\sqrt{\eps}$ in place of $\eps$; note for this 
that $0 < \sqrt{\eps} < \frac{\alpha^2 p}{20k}$. Assume that $\mu$ is
small enough and $n_0 \geq 10k^2$ is large enough for this application of 
Lemma~\ref{connecting}, and also to apply Lemma~\ref{lem:absorbingsets} 
and for the union bound later in the proof. Let $H$ be an 
$(n, p, \mu, \alpha)$ $k$-graph with $n \geq n_0$ and let
\begin{align*}
    q := \frac{\connectingconst \eps \nu n}{4k^2} \binom{n}{\sizeFprime}^{-1}.
\end{align*}
Now form a collection $\Z$ of subsets of $V(H)$ by including each set $Z \subseteq V(H)$ of
size $\sizeFprime$ at random with probability $q$ and independently of all other choices.  The
expected size of $\Z$ is $q \binom{n}{\sizeFprime} = \frac{\connectingconst \eps \nu
n}{4k^2}$ so by Markov's inequality, with probability at least $3/4$ we have $|\Z| \leq
\frac{\connectingconst \eps \nu n}{k^2}$.  Similarly, the expected number of ordered pairs of members of $\Z$ which intersect is 
$$q^2 \binom{n}{\sizeFprime} (\sizeFprime) \binom{n}{\sizeFprimeminusone} 
\leq \left(\frac{c_6\eps\nu n}{4k^2}\right)^2 \frac{(3k^2-8k+6)^2}{n-(3k^2-8k+5)}
\leq \connectingconst^2 \eps^2 \nu^2 n,$$
 so by Markov's inequality, with probability at least
$3/4$ we have that at most $4\connectingconst^2 \eps^2 \nu^2 n \leq \frac{c_6\eps\nu^2n}{100k^2}$ members of $\Z$ intersect another member of $\Z$.

For each $Y \subseteq V(H)$ of size $|Y| = k-1$, choose a collection
$\Gamma_Y$ of $\nu \binom{n}{\sizeFprime}$ absorbing sets for $Y$. 
Such a subset $\Gamma_Y$ exists by Lemma~\ref{lem:absorbingsets}. Then for any fixed $Y$ the expected size of $\Gamma_Y \cap \Z$ is $q \nu \binom{n}{\sizeFprime} = \frac{\connectingconst \eps \nu^2 n}{4k^2}$, so by Lemma~\ref{lem:chernoff} we have
\begin{align*}
  \mathbb{P}\left[ \Big| |\Gamma_Y \cap \Z| - \frac{\connectingconst \eps \nu^2 n}{4k^2}
  \Big| > \frac{\connectingconst \eps \nu^2 n}{8k^2} \right] \leq 2e^{-\connectingconst \eps \nu^2 n/96k^2}.
\end{align*}
Taking a union bound over all of the $\binom{n}{k-1}$ sets $Y$ of $k-1$ vertices of $H$, we find that with probability at least $3/4$ the collection $\Z$ contains at least $\frac{\connectingconst \eps \nu^2 n}{8k^2}$ absorbing sets for each $Y$.

We may therefore fix a collection $\Z$ such that each of the three events described as having probability at least $3/4$ hold. Let 
$\Z'$ be the subfamily of $\Z$ obtained by deleting the at most $\frac{\connectingconst \eps \nu^2 n}{100 k^2}$ members of $\Z$ which intersect another member of $\Z$, as well as any member of $\Z$ which is not an absorbing set for any $Y \in \binom{V(H)}{k-1}$, and define $t = |\Z'|$. Then $t \leq |\Z| \leq \frac{\connectingconst \eps \nu n}{k^2} \leq \connectingconst \eps n$ and, for any $Y \subseteq V(H)$ with
$|Y| = k-1$, at least $\frac{\connectingconst \eps \nu^2 n}{8 k^2} - \frac{\connectingconst \eps \nu^2 n}{100k^2} \geq \frac{\connectingconst \nu^2 \eps n}{10k^2} = \absorbingconst \eps n$ members of $\Z'$ are absorbing sets for $Y$.

Write $\Z' = \{Z_1, \dots, Z_t\}$ and $A = \bigcup \Z'$. For each $i$, since $Z_i$ is an absorbing set for some $Y \in \binom{V(H)}{k-1}$, we know that $Z_i$
induces a copy of $F \sm S$ in $H$; let $u_i$ and $v_i$ be the vertices corresponding to $u$ and $v$ in this copy. In particular $H[Z_i]$ contains a spanning path $P_i$ with endvertex pair $(u_i, v_i)$. We now apply Lemma~\ref{connecting} with $\sqrt{\eps}$ in place of $\eps$ to obtain vertex-disjoint paths $Q_i$ of length $3$
for $1 \leq i \leq t-1$ such that the endvertex pair of $Q_i$ is $(v_i, u_{i+1})$, and such that the paths 
$Q_i$ contain no vertices of $A$ except for the vertices of these endvertex pairs. 
This is possible because $|A| \leq t(3k^2-8k+6) \leq \sqrt{\eps} n$ and
$t \leq \connectingconst \eps n$.
Having chosen the paths $Q_i$, we define the path $P = P_1, Q_1, P_2, Q_2, \dots, P_{t-1}, Q_{t-1}, P_t$, so $P$ has $t (3k^2 - 8k + 6) + (t-1)(3k-4) \leq \eps n$ vertices and endvertex pair $(u, v)$, where $u=u_1$ and $v = v_t$.

Now consider any set $X \subseteq V(H) \sm V(P)$ such that $|X| \leq \absorbingconst \eps n$ and 
$|X|$ is divisible by $k-1$.
Partition $X$ arbitrarily into sets $Y_1, \dots, Y_{|X|/(k-1)}$ of size $k-1$ and greedily assign each $Y_i$ to a unique member $Z_{f(i)}$ of $\Z'$ which is an absorbing set
for $Y_i$. Such an assignment is possible since for each $Y_i$ at least $\absorbingconst \eps n$ members of $\Z'$ are absorbing sets for $Y$.
Since $Z_{f(i)}$ is an absorbing set for $Y_i$ there is then a path $P^*_{f(i)}$ in $H$ with $V(P^*_{f(i)}) = V(P_{f(i)}) \cup Y_i$ which has the same endvertex pair $(u_{f(i)}, v_{f(i)})$ as $P_{f(i)}$. 
By replacing the path $P_{f(i)}$ with $P^*_{f(i)}$ in $P$ for each $1 \leq i \leq t$ we obtain a path $P^*$ in $H$ with the same endvertex pair $(u, v)$ as $P$ such that $V(P^*) = V(P) \cup X$, as required.
\end{proof} 

\subsection{The path cover lemma} \label{sub:pathcover} 

Since a quasirandom $k$-graph remains quasirandom even after the deletion of almost all of its vertices,
the main difficulty in proving the path cover lemma is to show that $H$ contains a single path of
linear length. We can then greedily choose and delete paths to obtain the desired collection of
paths.

\begin{lemma} \label{lem:linearpath}
  For any $k \geq 2$, $0 < p < 1$ and any $0 < \eps < \frac{p}{2k \cdot k!}$, there exists $\mu > 0$ and
  $n_0$ such that the following holds.  Let $H$ be an $(n,p,\mu)$ $k$-graph with $n \geq n_0$, and 
  let $X \subseteq V(H)$ be such that $|X| \geq \eps^2 n$. 
  Then $H[X]$ contains a path of length at least $\eps^3 n$.
\end{lemma}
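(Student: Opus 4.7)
The plan is to first pass to a subset $X' \subseteq X$ of linear size inside which every vertex has large degree in $H[X']$, and then build a long loose path in $H[X']$ by a greedy depth-first extension.

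For the shaving step, I would choose $\mu$ small enough that the $(p,\mu)$-dense property applied with $X_1 = \dots = X_k = X$ yields $|E(H[X])| \geq (1-\eta)p|X|^k/k!$ for a small constant $\eta>0$ to be fixed. Set the threshold $\tau := c(k-1)\eps|X|^{k-1}/(k-2)!$ for a constant $c>1$ (depending only on $k$) to be chosen, and iteratively delete any vertex of the current induced subhypergraph whose current degree has fallen below $\tau$; call the surviving set $X'$. By construction every vertex of $X'$ has degree at least $\tau$ in $H[X']$. Each deletion destroys at most $\tau$ edges, so at most $|X|\tau = ck(k-1)^2\eps|X|^k/k!$ edges are destroyed in total. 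The hypothesis $\eps < p/(2k\cdot k!)$ permits choosing $c$ strictly less than $(1-2\eta)\cdot 2k!/(k-1)^2$, an interval that strictly contains $1$ for every $k\geq 2$ because $(k-1)^2 < 2k!$. With such $c$, the surviving edge count is at least $\eta p|X|^k/k!$, forcing $|X'|^k \geq \eta p|X|^k$, so $|X'|$ is a positive constant fraction of $|X|$.

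Next I build a loose path greedily in $H[X']$. Starting from any single edge of $H[X']$, extend repeatedly at one endvertex $v$: using the codegree bound $\binom{|X'|-2}{k-2} \leq |X|^{k-2}/(k-2)!$, the number of extending edges $\{v\}\cup W \in H[X']$ with $W\subseteq X'\setminus V(P)$ and $|W|=k-1$ is at least $\tau - (|V(P)|-1)\,|X|^{k-2}/(k-2)!$, which is strictly positive as long as $|V(P)| < c(k-1)\eps|X|$. I may therefore continue extending until $|V(P)|$ first reaches $c(k-1)\eps|X|$, at which point $P$ has length at least $c\eps|X| - O(1) \geq c\eps^3 n - O(1)$. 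Since $c>1$ and $n\geq n_0$ may be taken arbitrarily large, this is at least $\eps^3 n$, as required.

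The principal obstacle is the simultaneous quantitative balancing of the two steps: the threshold $\tau$ must be small enough that shaving preserves a positive fraction of $|X|$, yet large enough that the greedy extension can be sustained up to length $\eps^3 n$. Both requirements fit together precisely because the hypothesis $\eps < p/(2k\cdot k!)$ encodes the arithmetic inequality $(k-1)^2 < 2k!$ for every $k\geq 2$, which is exactly what allows one to pick a constant $c>1$ lying in the interval $\bigl(1,\,(1-2\eta)\cdot 2k!/(k-1)^2\bigr)$.
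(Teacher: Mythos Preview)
Your proposal is correct and follows essentially the same approach as the paper: pass to a subhypergraph of $H[X]$ with linear minimum vertex degree via iterative deletion of low-degree vertices, then observe that a maximal (equivalently, greedily extended) loose path in this subhypergraph must have length at least $\eps^3 n$, since otherwise the degree of an endvertex would be too small. The paper's presentation is slightly cleaner in that it uses the cruder bound $\delta_1(H'') \geq (\text{average degree})/k$ directly, avoiding the introduction of the auxiliary constant $c$ and the explicit interval-balancing you carry out, but the underlying argument is the same.
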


\begin{proof}
Let $\mu = \frac{p}{2} \eps^{2k}$, let $H' = H[X]$, and let $m = |X|$ (so $m \geq \eps^2n$). We first claim that the 
average vertex degree of vertices in $H'$ is at least $\frac{p}{2} \frac{m^k}{(k-1)!}$.  
Indeed, since $H$ is $(p,\mu)$-dense,
$e(X,\dots,X) \geq pm^k - \mu n^k \geq \frac{p}{2} m^k$.  Since $|H'| = \frac{1}{k!}
e(X,\dots,X)$, the average vertex degree of $H'$ is at least $\frac{p}{2} \frac{m^{k-1}}{(k-1)!}$. 
It follows that there exists a subgraph $H''$ of $H'$ with minimum vertex degree $\delta_1(H'') \geq \frac{p}{2k}
\frac{m^{k-1}}{(k-1)!}$, since the deletion of any vertex whose vertex degree is smaller than this increases the average vertex degree.

Now let $P$ be a longest path in $H''$ and let $(x,y)$ be an endvertex pair of $P$. Then all edges 
of $H''$ containing $y$ must also contain another vertex of $P$ or we could extend $P$. If the length of $P$ is less than $\eps^3 n$, then $P$ has fewer than $k \eps^3 n$ vertices, which implies that $d_{H''}(y) \leq k \eps^3 n m^{k-2} \leq
k\eps m^{k-1}$. Since $\eps < \frac{p}{2k^2 (k-1)!}$,
this contradicts the minimum degree of $H''$. So $P$ is a path in $H[X]$ of length at least $\eps^3 n$.
\end{proof}

\begin{proof}[Proof of Lemma~\ref{pathcover}]
We repeatedly apply Lemma~\ref{lem:linearpath} to choose a path of length at least $\eps^3 n$ in $H$ to add to $\mathcal{P}$. In each application we take $X$ to be the set of all vertices of $H$ not covered by previously-chosen members of $\mathcal{P}$, and we continue until $|X| < \eps^2 n$, at which point we can no longer apply Lemma~\ref{lem:linearpath}. At this point at most $\eps^2 n$ vertices of $H$ are not covered by paths in $\mathcal{P}$. Moreover, the paths in $\mathcal{P}$ are vertex-disjoint by our choice of $X$, and $\mathcal{P}$ therefore has size at most $1/\eps^3$ since each member of $\mathcal{P}$ has length at least $\eps^3 n$. 
\end{proof}

This completes the proof of the last of our three key lemmas, and so concludes the proof of Theorem~\ref{main}.

\section{Avoiding Hamilton $\ell$-cycles} \label{sec:constr}

In this section we prove Proposition~\ref{prop:constr} using the following construction, which was presented for $3$-graphs in~\cite{pp-lenz14}.

\begin{constr}
For integers $\ell$, $k$ and $n$ with $2 \leq \ell \leq k-1$, we form a random $k$-graph $H = H(n, k, \ell)$ on $n$ vertices as follows. Let $X$ and $Y$ be disjoint sets such that $|X \cup Y| = n$, $n/2 - 1 \leq |X| < n/2+1$, and $|X|$ is odd. Let $V := X \cup Y$, and let $G = G^{(\ell)}(n,\frac{1}{2})$ be the random $\ell$-graph on vertex set $V$ in which each edge is included with probability $1/2$, independently of all other choices. We take $V$ to be the vertex set of $H$, and say that a $k$-tuple $e$ of vertices of $V$ forms an edge of $H$ if either 
\begin{enumerate}[noitemsep, label=(\alph*)]
\item $|e \cap X|$ is even and $e$ induces a clique in $G$, or
\item $|e \cap X|$ is odd and $e$ induces an independent set in $G$.
\end{enumerate}
\end{constr}

The following proposition shows that, whenever $k-\ell$ divides $k$ and $2k$ divides $n$, the graph $H$ constructed above does not contain a Hamilton $\ell$-cycle.

\begin{prop} \label{lem:notwocycle}
Let $k \geq 3$ and $2 \leq \ell \leq k-1$ be integers such that $k-\ell$ divides $k$. Then for any integer $n$ which is divisible by $2k$ the $k$-graph $H = H(n, k, \ell)$ does not contain a Hamilton $\ell$-cycle.
\end{prop}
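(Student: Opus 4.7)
The plan is to argue by contradiction, assuming that $H$ has a Hamilton $\ell$-cycle with edges $e_1,\dots,e_r$ (in cyclic order) and deriving that $|X|$ must be even, contradicting the construction. Set $d = k-\ell$ and $m = k/d$, so $r = n/d$ and, crucially, $r/m = n/k$ is an even integer by the hypothesis $2k \mid n$. I would first verify the following structural consequence of the construction: for each edge $e$ of $H$, either every $\ell$-subset of $e$ lies in $G$ (if $|e \cap X|$ is even) or no $\ell$-subset of $e$ lies in $G$ (if $|e \cap X|$ is odd).

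The first main step is to show that all the quantities $|e_i \cap X|$ have the same parity. For each $i$, the set $S_i := e_i \cap e_{i+1}$ has size exactly $\ell$ by the definition of an $\ell$-cycle, so $S_i$ is an $\ell$-subset of both $e_i$ and $e_{i+1}$. Whether $S_i$ belongs to $G$ or not therefore forces both $|e_i \cap X|$ and $|e_{i+1} \cap X|$ to have the same parity (even if $S_i \in G$, odd if $S_i \notin G$). Iterating around the cycle, all $|e_i \cap X|$ share a common parity.

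The second step refines this. Using the cyclic vertex ordering, partition $V$ into $r$ consecutive blocks $B_1,\dots,B_r$ of $d$ vertices each, so that $e_i$ covers exactly the $m$ blocks $B_i, B_{i+1}, \dots, B_{i+m-1}$ (indices mod $r$). Writing $t_j = |B_j \cap X|$, we have $|e_i \cap X| = t_i + t_{i+1} + \dots + t_{i+m-1}$, and the common-parity condition $|e_{i+1} \cap X| \equiv |e_i \cap X| \pmod 2$ collapses to $t_{i+m} \equiv t_i \pmod 2$ for every $i$. Since $m \mid r$, the residue classes of $\{1,\dots,r\}$ modulo $m$ are precisely the orbits of the shift-by-$m$ map, each of size $r/m$, and on each such class the parity of the $t_j$ is constant.

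Finally, writing $\pi_s$ for the common parity on class $s$, I would compute $|X| = \sum_j t_j \equiv (r/m)\sum_{s=0}^{m-1} \pi_s \pmod 2$. Since $r/m = n/k$ is even (here is where $2k \mid n$ is used), this forces $|X|$ to be even, contradicting the construction's requirement that $|X|$ is odd. The main obstacle I anticipate is making sure the parity cascade in the first step is applied cyclically (so that wraparound does not cause any inconsistency) and that the block structure really matches up, i.e., that each edge covers exactly $m$ whole consecutive blocks — this last point is exactly what the divisibility $k - \ell \mid k$ is providing.
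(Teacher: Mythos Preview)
Your proof is correct and follows essentially the same approach as the paper: both argue by contradiction, decompose the cyclic vertex order into $n/(k-\ell)$ blocks of size $k-\ell$, use the shared $\ell$-set $e_i \cap e_{i+1}$ to force all $|e_i \cap X|$ to have the same parity (hence $t_{i+m} \equiv t_i \pmod 2$), and then exploit that each residue class modulo $m$ has even size $n/k$ to conclude $|X|$ is even. The paper phrases the final step as grouping the blocks into sets $D_1,\dots,D_m$ that partition $V$, each with $|D_s \cap X|$ even, but this is exactly your computation $|X| \equiv (r/m)\sum_s \pi_s \pmod 2$.
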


\begin{proof} 
Let $L := n/(k-\ell)$. Suppose that $C$ is a Hamilton $\ell$-cycle in $H$ with edges $e_1, \dots, e_L$ indexed in the order they appear in $C$. For each $1 \leq i \leq L$ define the \emph{block} $B_i := e_i \sm e_{i+1}$ (with indices taken modulo $L$), so $|B_i| = k-\ell$. Then for any $1 \leq i \leq L$ we have $e_i = B_i \cup \dots \cup B_{i+k/(k-\ell)-1}$ and $e_{i+1} = B_{i+1} \cup \dots \cup B_{i+k/(k-\ell)}$. Since $|e_i \cap e_{i+1}| = \ell$, by definition of $H$ either both $e_i$ and $e_{i+1}$ induce cliques in $G$, or both $e_i$ and $e_{i+1}$ induce independent sets in $G$. In either case we find that both $|e_i \cap X|$ and $|e_{i+1} \cap X|$ have the same parity, and therefore that $|B_i \cap X|$ and $|B_{i+k/(k-\ell)} \cap X|$ have the same parity. Since $L(k-\ell)/k = n/k$ is even, it follows that for any $1 \leq i \leq k/(k-\ell)$, the set
$$D_i = \bigcup_{0 \leq j < L(k-\ell)/k} B_{i + jk/(k-\ell)}$$
has the property that $|D_i \cap X|$ is even. However, since the sets $D_1, \dots, D_{k/(k-\ell)}$ partition the vertex set $V$, it follows that $|V \cap X|$ is even also, contradicting the fact that $|X|$ is odd.
\end{proof}

\begin{proof}[Proof of Proposition~\ref{prop:constr}]
By Proposition~\ref{lem:notwocycle} it suffices to show that for any $\mu > 0$ there exists $n_0$ such that if $n \geq n_0$ then the random graph $H(n, k, \ell)$ satisfies properties (a) and (b) of Proposition~\ref{prop:constr} with positive probability. For the case $k=3$ short proofs of these statements were given in \cite[Lemmas 20 and 21]{pp-lenz14}, and similar arguments hold for any $k \geq 3$ (we omit the details).
\end{proof} 

We conclude by giving the construction mentioned in Section~\ref{sec:new}, of an $(n, p, \mu)$ $k$-graph~$H'$ with $\delta_{j}(H') \geq \alpha \binom{n}{k-j}$ for any $1 \leq j \leq \ell-1$ which contains a vertex $x$ such that the intersection of any edge containing $x$ and any edge not containing $x$ has size at most $\ell-1$ (where $2 \leq \ell \leq k-1$); this is similar to the construction given above. Fix a set $V$ of $n$ vertices, and choose some $x \in V$. Let $G = G^{(\ell)}(n,\frac{1}{2})$ be the random $\ell$-graph on vertex set $V$ in which each edge is included with probability $1/2$, independently of all other choices. We define $H'$ to be the $k$-graph on $V$ whose edges are all $k$-tuples $e$ of vertices of $V$ such that either $x \in e$ and $e$ induces a clique in $G$, or $x \notin e$ and $e$ induces an independent set in $G$. Since a clique and an independent set in an $\ell$-graph can have at most $\ell-1$ vertices in common, $H'$ has the desired property that the intersection of any edge containing $x$ and any edge not containing $x$ has size at most $\ell-1$. Moreover, standard probabilistic arguments similar to those of \cite[Lemmas 20 and 21]{pp-lenz14} show that, for any fixed $\mu > 0$, with high probability $H'$ is indeed an $(n, 2^{-\binom{k}{\ell}}, \mu)$ $k$-graph with $\delta_{j}(H') \geq 2^{-\binom{k}{\ell}} \binom{n}{k-j}$ for any $1 \leq j \leq \ell-1$.

\medskip \noindent
\textbf{Acknowledgements.} We would like to thank Peter Allen for helpful discussions, in particular for pointing out the modified construction given in the final paragraph. We also thank the anonymous referees for their helpful comments.

\bibliographystyle{abbrv}
\bibliography{refs}

\begin{thebibliography}{10}

\bibitem{Bal12}
D.~Bal and A.~Frieze.
\newblock Packing tight {H}amilton cycles in uniform hypergraphs.
\newblock {\em SIAM Journal on Discrete Mathematics}, 26(2):435--451, 2012.

\bibitem{hh-bermond78}
J.-C. Bermond, A.~Germa, M.-C. Heydemann, and D.~Sotteau.
\newblock Hypergraphes hamiltoniens.
\newblock In {\em Probl\`emes combinatoires et th\'eorie des graphes ({C}olloq.
  {I}nternat. {CNRS}, {U}niv. {O}rsay, {O}rsay, 1976)}, volume 260 of {\em
  Colloq. Internat. CNRS}, pages 39--43. CNRS, Paris, 1978.

\bibitem{hh-bus13}
E.~Bu{\ss}, H.~H{\`a}n, and M.~Schacht.
\newblock Minimum vertex degree conditions for loose {H}amilton cycles in
  3-uniform hypergraphs.
\newblock {\em Journal of Combinatorial Theory, Series B}, 103(6):658--678,
  2013.

\bibitem{qsi-chung89}
F.~R.~K. Chung, R.~L. Graham, and R.~M. Wilson.
\newblock Quasi-random graphs.
\newblock {\em Combinatorica}, 9(4):345--362, 1989.

\bibitem{dirac52}
G.~A. Dirac.
\newblock Some theorems on abstract graphs.
\newblock {\em Proc. London Math. Soc. (3)}, 2:69--81, 1952.

\bibitem{Frieze12a}
A.~Frieze and M.~Krivelevich.
\newblock Packing {H}amilton cycles in random and pseudo-random hypergraphs.
\newblock {\em Random Structures \& Algorithms}, 41(1):1--22, 2012.

\bibitem{Frieze12b}
A.~Frieze, M.~Krivelevich, and P.-S. Loh.
\newblock Packing tight {H}amilton cycles in 3-uniform hypergraphs.
\newblock {\em Random Structures \& Algorithms}, 40(3):269--300, 2012.

\bibitem{hh-han10}
H.~H{\`a}n and M.~Schacht.
\newblock Dirac-type results for loose {H}amilton cycles in uniform
  hypergraphs.
\newblock {\em Journal of Combinatorial Theory, Series B}, 100(3):332--346,
  2010.

\bibitem{han15}
J.~Han and Y.~Zhao.
\newblock Minimum codegree threshold for {H}amilton $\ell$-cycles in
  $k$-uniform hypergraphs.
\newblock {\em Journal of Combinatorial Theory, Series A}, 132:194--223, 2015.

\bibitem{hh-han14}
J.~Han and Y.~Zhao.
\newblock Minimum vertex degree threshold for loose {H}amilton cycles in
  3-uniform hypergraphs.
\newblock {\em Journal of Combinatorial Theory, Series B}, 114:70--96, 2015.

\bibitem{HKS}
D.~Hefetz, M.~Krivelevich, and T.~Szab{\'o}.
\newblock Hamilton cycles in highly connected and expanding graphs.
\newblock {\em Combinatorica}, 29(5):547--568, 2009.

\bibitem{janson2000random}
S.~Janson, T.~{\L}uczak, and A.~Ruci\'nski.
\newblock {\em Random Graphs}.
\newblock Wiley, 2000.

\bibitem{hh-katona99}
G.~Y. Katona and H.~A. Kierstead.
\newblock Hamiltonian chains in hypergraphs.
\newblock {\em J. Graph Theory}, 30(3):205--212, 1999.

\bibitem{hh-keevash11}
P.~Keevash, D.~K{\"u}hn, R.~Mycroft, and D.~Osthus.
\newblock Loose {H}amilton cycles in hypergraphs.
\newblock {\em Discrete Math.}, 311(7):544--559, 2011.

\bibitem{hh-kuhn10}
D.~K{\"u}hn, R.~Mycroft, and D.~Osthus.
\newblock Hamilton {$\ell$}-cycles in uniform hypergraphs.
\newblock {\em Journal of Combinatorial Theory, Series A}, 117(7):910--927,
  2010.

\bibitem{hh-kuhn06}
D.~K{\"u}hn and D.~Osthus.
\newblock Loose {H}amilton cycles in 3-uniform hypergraphs of high minimum
  degree.
\newblock {\em Journal of Combinatorial Theory, Series B}, 96(6):767--821,
  2006.

\bibitem{KOT}
D.~K{\"u}hn, D.~Osthus, and A.~Treglown.
\newblock Hamiltonian degree sequences in digraphs.
\newblock {\em Journal of Combinatorial Theory, Series B}, 100(4):367--380,
  2010.

\bibitem{pp-lenz14}
J.~Lenz and D.~Mubayi.
\newblock Perfect packings in quasirandom hypergraphs.
\newblock Accepted in \emph{J. Combin. Theory Ser. B}.
  \url{http://arxiv.org/abs/1402.0884}.

\bibitem{pp-lenz14-2}
J.~Lenz and D.~Mubayi.
\newblock Perfect packings in quasirandom hypergraphs {II}.
\newblock Submitted. \url{http://arxiv.org/abs/1405.0065}.

\bibitem{hqsi-lenz-poset12}
J.~Lenz and D.~Mubayi.
\newblock The poset of hypergraph quasirandomness.
\newblock Accepted in \textit{Random Structures \& Algorithms}.
  \url{http://arxiv.org/abs/1208.5978}.

\bibitem{hqsi-lenz-quasi12}
J.~Lenz and D.~Mubayi.
\newblock Eigenvalues and linear quasirandom hypergraphs.
\newblock {\em Forum of Mathematics, Sigma}, 3, 2015.

\bibitem{LP}
A.~Lo and V.~Patel.
\newblock Hamilton cycles in sparse robustly expanding digraphs.
\newblock \url{http://arxiv.org/abs/1507.04472}.

\bibitem{hh-rodl06}
V.~R{\"o}dl, A.~Ruci{\'n}ski, and E.~Szemer{\'e}di.
\newblock A {D}irac-type theorem for 3-uniform hypergraphs.
\newblock {\em Combin. Probab. Comput.}, 15(1-2):229--251, 2006.

\bibitem{hh-rodl08}
V.~R{\"o}dl, A.~Ruci{\'n}ski, and E.~Szemer{\'e}di.
\newblock An approximate {D}irac-type theorem for {$k$}-uniform hypergraphs.
\newblock {\em Combinatorica}, 28(2):229--260, 2008.

\bibitem{qsi-thomason87}
A.~Thomason.
\newblock Pseudorandom graphs.
\newblock In {\em Random graphs '85 ({P}ozna\'n, 1985)}, volume 144 of {\em
  North-Holland Math. Stud.}, pages 307--331. North-Holland, Amsterdam, 1987.

\bibitem{qsi-thomason87-2}
A.~Thomason.
\newblock Random graphs, strongly regular graphs and pseudorandom graphs.
\newblock In {\em Surveys in Combinatorics 1987 ({N}ew {C}ross, 1987)}, volume
  123 of {\em London Math. Soc. Lecture Note Ser.}, pages 173--195. Cambridge
  Univ. Press, Cambridge, 1987.

\end{thebibliography}

\end{document}